\newcommand\rset{\mathbb{R}}
\newcommand\tset{\mathbb{T}}
\newcommand\zset{\mathbb{Z}}
\newcommand\diff{\mathrm{d}}
\title{Fast transport optimization for Monge costs on~the~circle%
  \thanks{Supported by the French National Research Agency (project
    ANR-07-BLAN-0235 OTARIE,
    \texttt{http://www.mccme.ru/\~{}ansobol/otarie/}). %
    The hospitality of UMR~6202 CNRS ``Laboratoire Cassiop{\'e}e''
    (Observatoire de la C{\^o}te d'Azur) is gratefully acknowledged.}}
\author{Julie Delon\thanks{LTCI CNRS, T{\'e}l{\'e}com ParisTech.} %
  \and Julien Salomon\thanks{Universit{\'e}
    Paris-Dauphine and CEREMADE.} %
  \and Andrei Sobolevski\thanks{Institute for Information Transmission
    Problems and UMI 2615 CNRS ``Laboratoire J.-V.~Poncelet,'' Moscow;
    partially supported by the Ministry of National Education of
    France and by the Russian Fund for Basic Research (project RFBR
    07--01--92217-CNRSL-a).}}
\begin{document}

\maketitle

\begin{abstract}
Consider the problem of optimally matching two measures on the
circle, or equivalently two periodic measures on~$\rset$, and
suppose the cost $c(x, y)$ of matching two points $x$,~$y$ satisfies
the Monge condition: $c(x_1, y_1) + c(x_2, y_2) < c(x_1, y_2) +
c(x_2, y_1)$ whenever $x_1 < x_2$ and $y_1 < y_2$. %
We introduce a notion of \protect\emph{locally optimal} transport
plan, motivated by the weak KAM (Aubry--Mather) theory, and show
that all locally optimal transport plans are conjugate to shifts and
that the cost of a locally optimal transport plan is a convex
function of a shift parameter. %

This theory is applied to a transportation problem arising in image
processing: for two sets of point masses on the circle, both of which
have the same total mass, find an optimal transport plan with respect
to a given cost function~$c$ satisfying the Monge condition. %
In the circular case the sorting strategy fails to provide a unique
candidate solution and a naive approach requires a quadratic number of
operations. %
For the case of $N$ real-valued point masses we present an
$O(N|\log\epsilon|)$ algorithm that approximates the optimal cost
within~$\epsilon$; when all masses are integer multiples of~$1/M$, the
algorithm gives an exact solution in $O(N\log M)$ operations.
\end{abstract}

\begin{keywords}
  Monge--Kantorovich problem, Monge cost, Aubry--Mather (weak KAM) theory.
\end{keywords}

\begin{AMS}
  Primary 90C08; Secondary 68Q25, 90C25
\end{AMS}

\pagestyle{myheadings}
\thispagestyle{plain}
\markboth{J.~DELON, J.~SALOMON AND A.~SOBOLEVSKI}{FAST TRANSPORT
  OPTIMIZATION ON THE CIRCLE}

\section{Introduction}
\label{sec:intro}

The transport optimization problem, introduced by G.~Monge in 1781 and
shown by L.~Kantorovich in 1942 to be an instance of linear
programming, is a convex optimization problem with strong geometric
features. %
A typical example is minimization of mean-square displacement between
two given finite marginal measures supported on convex compacta in
Euclidean space: in this case a solution is defined by gradient of a
convex function that satisfies a suitable Monge--Amp{\`e}re
equation. %
Various generalizations of this result and rich bibliographies can be
found, e.g., in the survey \cite{Gangbo.W:1996} or the recent
monograph \cite{Villani.C:2009}.

Further constraints on the two marginals or their supports may furnish
the problem with useful additional convex structure. %
One way of making this statement quantitative is to consider the
algorithmic complexity of the corresponding numerical transport
optimization schemes. %
In particular when the two measures live on segments of straight
lines, the optimal map is monotone and may be found by sorting, which
takes $O(n\log n)$ operations when the data come in the form of
discrete $n$-point histograms. %
If the input data are already sorted, this count falls to $O(n)$. %

The optimal transport is well understood also when the marginals live
on a compact Riemannian manifold \cite{Feldman.M:2002}; the existence
and characterization of optimal map in the case of a flat torus and
quadratic cost have been established a decade ago
\cite{Cordero-Erausquin.D:1999}. %
However, the algorithmics of even the simplest setting of the unit
circle is no longer trivial, because the support of the measures is
now \emph{oriented} rather than \emph{ordered}. %
A naive approach would require solving the problem for each of $n$
different alignments of two $n$-point histograms, thereby involving
$O(n^2)$ operations. %

In this paper we present an efficient algorithm of transport
optimization on the circle, which is based on a novel analogy with the
Aubry--Mather (weak KAM) theory in Lagrangian dynamics (see, e.g.,
\cite{Aubry.S:1983,Fathi.A:2009,Knill.O:2003}). %
The key step is to lift the transport problem to the universal cover
of the unit circle, rendering the marginals periodic and the cost of
transport infinite. %
However, it still makes sense to look for those transport maps whose
cost cannot be decreased by any \emph{local} modification. %
Different locally optimal maps, which cannot be deformed into each
other by any local rearrangement, form a family parameterized with an
analogue of the rotation number in the Aubry--Mather theory. %
One can introduce a counterpart of the average Lagrangian, or Mather's
$\alpha$ function in the Aubry--Mather theory, which turns out to be
efficiently computable. %
As we show below, its minimization provides an efficient algorithm of
transport optimization on the circle. %
The class of cost functions for which this theory works includes all
costs with the Monge property, such as the quadratic cost or costs
generated by natural Lagrangians with time-periodic potentials
\cite{Bernard.P:2007c,Knill.O:2003}. %

Note that the problem of optimally matching circular distributions
appears in a variety of applications. %
Important examples are provided by image processing and computer
vision: image matching techniques for retrieval, classification, or
stitching purposes \cite{Zhang.J:2007,Brown.M:2005} are often based on
matching or clustering ``descriptors'' of local features
\cite{Lowe.D:2004}, which typically consist of one or multiple
histograms of gradient orientation. %
Similar issues arise in object pose estimation and pattern recognition
\cite{Lowe.D:2004,Gangbo.W:2000}. %
Circular distributions also appear in a quite different context of
analysis of color images, where hue is parameterized by polar angle. %
In all these applications, matching techniques must be robust to data
quantization and noise and computationally effective, which is
especially important with modern large image collections. %
These requirements are satisfied by the optimal value of a transport
cost for a suitable cost function. %

This paper is organized as follows. %
In \S\ref{sec:informal} we give a specific but nontechnical overview
of our results. %
After the basic definitions are given in \S\ref{sec:preliminaries},
including that of locally optimal transport plans, in
\S\ref{sec:conjugate} we give an explicit description of the family of
locally optimal transport plans: they are conjugate, in measure
theoretic sense, to rotations of the unit circle (or equivalently to
shifts of its universal cover). %
This result is in direct analogy with conjugacy to rotations in the
one-dimensional Aubry--Mather theory \cite{Aubry.S:1983}. %
As shown in \S\ref{sec:average-cost}, the average cost of a locally
optimal transport plan is a convex function of the shift parameter. %
Moreover, the values of this function and its derivative are
efficiently computable when the marginal measures are discrete, which
enables us to present in \S\ref{sec:algor-transp-optim} a fast
algorithm for transport optimization on the circle. %
The same section contains results of a few numerical experiments. %
Finally a review of related work in the computer science literature is
given in \S\ref{sec:disc-concl}. %

\section{Informal overview}
\label{sec:informal}

For two probability measures $\hat\mu_0$,~$\hat\mu_1$ on the unit
circle $\tset = \rset/\zset$ and a given cost $\hat c(\hat x, \hat y)$
of transporting a unit mass from $\hat x$ to~$\hat y$ in~$\tset$, the
\emph{transport cost} is defined as the $\inf$ of the quantity
\begin{equation}
  \label{eq:1}
  \hat I(\gamma) =  \iint_{\tset\times\tset} \hat c(\hat x, \hat y)\,
  \gamma(\diff\hat x\times\diff\hat y).
\end{equation}
over the set of all \emph{couplings} $\gamma$ of the probability
measures~$\hat\mu_0$,~$\hat\mu_1$ (i.e., all measures on
$\tset\times\tset$ with marginals $\hat\mu_0$,~$\hat\mu_1$). %
These couplings are usually called \emph{transport plans}. %

Suppose that the cost function $\hat c(\cdot, \cdot)$ on
$\tset\times\tset$ is determined via the relation $\hat c(\hat x, \hat
y) = \inf c(x, y)$ by a function $c(\cdot, \cdot)$ on
$\rset\times\rset$ satisfying the condition $c(x + 1, y + 1) = c(x,
y)$ for all~$x$,~$y$; here $\inf$ is taken over all $x$,~$y$ whose
projections to the unit circle coincide with $\hat x$,~$\hat y$. %
We lift the measures $\hat\mu_0$ and~$\hat\mu_1$ to~$\rset$, obtaining
periodic locally finite measures $\mu_0$,~$\mu_1$, and redefine
$\gamma$ to be their coupling on $\rset\times\rset$. %
It is then convenient to replace the problem of minimizing the
integral~\eqref{eq:1} with ``minimization'' of an integral
\begin{equation}
  \label{eq:2}
  I(\gamma) = \iint_{\rset\times\rset} c(x, y)\, \gamma(\diff
  x\times\diff y).
\end{equation}
Although the latter integral is infinite, it still makes sense to look
for transport plans $\gamma$ minimizing $I$ with respect to
\emph{local} modifications, i.e., to require that for any compactly
supported signed measure $\delta$ of zero mass and finite total
variation, the difference $I(\gamma + \delta) - I(\gamma)$, which is
defined by a finite integral, be nonnegative. %
These \emph{locally optimal} transport plans are the main object of
this paper.

Assume that the cost function $c(x, y)$ satisfies the \emph{Monge
  condition} (alternatively known as the continuous Monge property,
see~\cite{Aggarwal.A:1992,Burkard.R:1996}):
\begin{equation}
  \label{eq:3}
  c(x_1, y_1) + c(x_2, y_2) < c(x_1, y_2) + c(x_2, y_1)
\end{equation}
for all $x_1 < x_2$ and $y_1 < y_2$. %
An example of such a cost function is $|x - y|^\lambda$, where
$\lambda > 1$; in this case the quantity
$\mathrm{MK}_\lambda(\hat\mu_0, \hat\mu_1) = (\inf_\gamma \hat
I(\gamma))^{1/\lambda}$ turns out to be a metric on the set of
measures on the circle, referred to as the \emph{Monge--Kantorovich
  distance} of order $\lambda$. %
The value $\lambda = 1$ can still be treated in the same framework as
the limiting case $\lambda \to 1$; it is sometimes called the
Kantorovich--Rubinshtein metric or, in image processing literature,
the Earth Mover's distance~\cite{Rubner.Y:2000}. %

The Monge condition~\eqref{eq:3} implies that whenever under a
transport plan the mutual order of any two elements of mass is
reversed, the transport cost can be strictly reduced by exchanging
their destinations. %
It follows that a locally minimal transport plan moves elements of
mass \emph{monotonically}, preserving their spatial order. %

\begin{figure}
  \centering
  \begin{tikzpicture}
    \begin{scope}[xscale = 1.6, yscale = 1.6, >=stealth]
      \draw[->] (0, -1) -- (0, 3) node [left] {$v$}; %
      \draw[->] (-.5, 0) -- (3.5, 0) node [above] {$u$} ; %
      \draw (0, 0) node [above left] {$O$}; %
      \draw[very thick] (-.5, -.5) cos (0, 0) sin (.5,
      .5) cos (1, 1) sin (1.5, 1.5) cos (2, 2) sin (2.5, 2.5) cos (3,
      3); %
      \draw (-.6, -.3) -- (0, 0) -- (.4, .4) -- (.4, .7) -- (1, 1) --
      (1.4, 1.4) -- (1.4, 1.7) -- (2, 2) -- (2.4, 2.4) -- (2.4, 2.7)
      -- (2.8, 2.9) node [above left] {$F_1$}; %
      \begin{scope}[yshift = -.8cm]
        \draw[very thick] (-.2, -.1) -- (0, 0) -- (.4, .4) -- (.4, .7)
        -- (1, 1) -- (1.4, 1.4) -- (1.4, 1.7) -- (2, 2) -- (2.4, 2.4)
        -- (2.4, 2.7) -- (3, 3) node [below right] {$F_1^\theta$} --
        (3.2, 3.2); %
        \draw (0, 0) node [right] {$\;-\theta$}; %
      \end{scope}
      \draw[densely dotted] (1.2, 0) -- (1.2, 1.33) -- (2.13, 1.33)
      -- (2.13, 0) node [below] {$\quad\mathstrut
        (F_1^{\smash\theta})^{\smash-1}(v)$}; %
      \draw (1.2, 0) node [below] {$\mathstrut
        F_0^{\smash-1}(v)\quad$}; %
      \draw[densely dotted] (1.2, 1.33) -- (0, 1.33) node [left]
      {$v$}; %
    \end{scope}
  \end{tikzpicture}
  \caption{Construction of the locally optimal transport
    plan~$\gamma_\theta$.}
  \label{fig:graphs}
\end{figure}
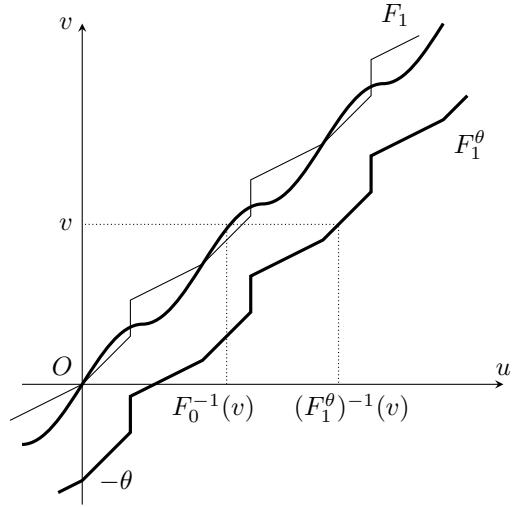

The whole set of locally optimal transport plans for a given pair of
marginals $\mu_0$,~$\mu_1$ can be conveniently described using a
construction represented in fig.~\ref{fig:graphs}. %
Let $F_0$,~$F_1$ be cumulative distribution functions of the measures
$\mu_0$,~$\mu_1$ normalized so that $F_0(0) = F_1(0) = 0$. %
We shall regard graphs of $F_0$,~$F_1$ as continuous curves including,
where necessary, the vertical segments corresponding to jumps of these
functions (which are caused by atoms of $\mu_0$,~$\mu_1$). %
Each of these curves specifies a correspondence, $F_0^{-1}$
or~$F_1^{-1}$, between points of the vertical axis $Ov$, representing
elements of mass, and points of the horizontal axis $Ou$, representing
spatial locations, and maps the Lebesgue measure on~$Ov$ into $\mu_0$
or~$\mu_1$ on~$Ou$. %
This correspondence is monotone and defined everywhere except on an
(at most countable) set of $v$ values that correspond to vacua of the
measure in the $Ou$ axis. %

Define now $F_1^\theta(u) = F_1(u) - \theta$. %
Then $(F_1^\theta)^{-1}$ represents a \emph{shift} of the $Ov$ axis
by~$\theta$ followed by an application of the
correspondence~$F_1^{-1}$, and still induces on the $Ou$ axis the same
measure~$\mu_1$ as $F_1$. %
A transport plan $\gamma_\theta$ that takes an element of mass
represented by~$v$ from $F_0^{-1}(v)$ to~$(F_1^\theta)^{-1}(v)$ is, by
construction, a monotone coupling of $\mu_0$ and~$\mu_1$, and thus a
locally optimal transport plan. %
Moreover, it is shown in \S\ref{sec:conjugate} that all locally
optimal transport plans can be obtained using this construction as the
parameter~$\theta$ runs over~$\rset$. %

Finally define the \emph{average cost} $C_{[F_0, F_1]}(\theta)$ of the plan
$\gamma_\theta$ per unit period:
\begin{displaymath}
  C_{[F_0, F_1]}(\theta) = \int_0^1 c(F_0^{-1}(v), (F_1^\theta)^{-1}(v))\, \diff v.
\end{displaymath}
It is shown in \S\ref{sec:average-cost} that the Monge condition
implies convexity of~$C_{[F_0, F_1]}(\theta)$ and that the global
minimum of this function in~$\theta$ coincides with the minimum value
of the transport cost on the unit circle~\eqref{eq:1}. %

When the marginals $\mu_0$,~$\mu_1$ are purely atomic with finite
numbers $n_0$ and~$n_1$ of atoms in each period, the function $C$
becomes piecewise affine. %
In \S\ref{sec:algor-transp-optim} we present an algorithm to
approximate its minimum value to accuracy~$\epsilon$, using a binary
search that takes $O((n_0 + n_1)\log(1/\epsilon))$ operations in the
real number computing model. %
When masses of all atoms are rational numbers with the least common
denominator~$M$, this search returns an exact solution provided that
$\epsilon < 1/M$. %
This gives an $O((n_0 + n_1)\log M)$ exact transport optimization
algorithm on the circle. %

\section{Preliminaries}
\label{sec:preliminaries}

Let $\tset = \rset/\zset$ be the unit circle, i.e., the segment
$[0,1]$ with identified endpoints. %
By $\pi\colon \rset \to \tset$ denote the projection that takes points
of the universal cover~$\rset$ to points of~$\tset$. %

\subsection{The cost function}
\label{sec:cost-function}

A \emph{cost function} is a real-valued function $c(\cdot, \cdot)$
defined on the universal cover~$\rset$ of the circle~$\tset$. %
We assume that it satisfies the \emph{Monge condition}: for any $x_1 <
x_2$ and~$y_1 < y_2$,
\begin{equation}
  \label{eq:4}
  c(x_1, y_1) + c(x_2, y_2) - c(x_1, y_2) - c(x_2, y_1) < 0.
\end{equation}
Additionally $c$ is assumed to be lower semicontinuous, to be
invariant with respect to integer shifts, i.e.,
\begin{equation}
  \label{eq:5}
  c(x + 1, y + 1) = c(x, y)
\end{equation}
for all $x$,~$y$, and to grow uniformly as $|x - y| \to \infty$: for
any~$P$ there exists a finite~$R(P) \ge 0$ such that
\begin{equation}
  \label{eq:6}
   c(x, y) \ge P\quad \text{whenever $|x - y| \ge R(P)$.}
\end{equation}
Note that the latter condition implies that the lower semicontinuous
function~$c$ is bounded from below (and guarantees that the minima in
a number of formulas below are attained).

Note that the Monge condition~\eqref{eq:4} holds for any twice
continuously differentiable function $c$ such that $\partial^2 c(x,
y)/\partial x\,\partial y < 0$. %
If the cost function depends only on $x - y$, this reduces to a
convexity condition: $-\partial^2 c(x - y)/\partial x\,\partial y =
c''(x - y) > 0$. %
In particular, all the above conditions are satisfied for the function
$c(x, y) = |x -y|^\lambda$, which appears in the definition of the
Monge--Kantorovich distance with $\lambda > 1$, and, more generally,
for any function of the form $c(x - y) + f(x) + g(y)$ with strictly
convex~$c$ and periodic $f$ and~$g$. %

For a cost function $c$ satisfying all the above conditions, the cost
of transporting a unit mass from $\hat x$ to~$\hat y$ on the circle is
defined as $\hat c(\hat x, \hat y) = \inf c(x, y)$, where $\hat
x$,~$\hat y$ are points of~$\tset$ and $\inf$ is taken over all
$x$,~$y$ in~$\rset$ such that $\pi x = \hat x$ and~$\pi y = \hat y$. %
Using the integer shift invariance, this definition can be lifted to
the universal cover as $\hat c(x, y) = \inf_{k\in\zset} c(x, y +
k)$. %

Condition~\eqref{eq:4} is all that is needed in
\S\ref{sec:conjugate}, which is concerned with locally optimal
transport plans on~$\rset$. %
Conditions~\eqref{eq:5}, \eqref{eq:6} come into play in
\S\ref{sec:average-cost}, which deals with transport
optimization on the circle.

\subsection{Distribution functions}
\label{sec:distr-funct}

For a given locally finite measure $\mu$ on~$\rset$ define its
\emph{distribution function} $F_\mu$ by
\begin{equation}
  \label{eq:7}
  F_\mu(0) = 0,\quad
  F_\mu(x) = \mu((0, x])\ \text{for $x > 0$},\quad
  F_\mu(x) = -\mu((x, 0])\ \text{for $x < 0$}.
\end{equation}
Then $\mu((x_1, x_2]) = F_\mu(x_2) - F_\mu(x_1)$ whenever~$x_1 < x_2$,
and this identity also holds for any function that differs
from~$F_\mu$ by an additive constant (the normalization $F_\mu(0) = 0$
is arbitrary). %
When $\mu$ is periodic with unit mass in each period, it follows that for all $x$ in~$\rset$
\begin{equation}
  \label{eq:8}
  F_\mu(x + 1) = F_\mu(x) + 1.
\end{equation}

The \emph{inverse} of a distribution function~$F_\mu$ is defined by
\begin{equation}
  \label{eq:9}
  F_\mu^{-1}(y) = \inf\{x\colon y < F_\mu(x)\}
  = \sup \{x\colon y \ge F_\mu(x)\}.
\end{equation}
Definitions~\eqref{eq:7} and~\eqref{eq:9} mean that $F_\mu$,
$F_\mu^{-1}$ are right-continuous. %
Discontinuities of $F_\mu$ correspond to atoms of~$\mu$ and
discontinuities of its inverse, to ``vacua'' of~$\mu$, i.e., to
intervals of zero $\mu$ measure. %

For a distribution function~$F_\mu$ define its \emph{complete graph}
to be the continuous curve formed by the union of the graph of~$F_\mu$
and the vertical segments corresponding to jumps of~$F_\mu$. %
Accordingly, by a slight abuse of notation let $F_\mu(\{x\})$ denote
the set $[F_\mu(x - 0), F_\mu(x)]$ (warning: $F_\mu(\{x\}) \supseteq
\{F_\mu(x)\}$) and let $F_\mu(A) = \bigcup_{x\in A} F_\mu(\{x\})$ for
any set~$A$. 

\subsection{Local properties of transport plans}
\label{sec:local-transp}

Let $\hat\mu_0$, $\hat\mu_1$ be two finite positive measures of unit
total mass on~$\tset$ and $\mu_0$, $\mu_1$ their liftings to the
universal cover~$\rset$, i.e., periodic measures such that $\mu_i(A) =
\hat\mu_i(\pi A)$, $i = 0, 1$, for any Borel set~$A$ that fits inside
one period. %
Periodicity of measures here means that $\mu(A + n) = \mu(A)$ for any
integer~$n$ and any Borel~$A$, where $A + n = \{x + n\colon x\in
A\}$. %

\begin{definition}
  \label{def:finite}
  A~\emph{(locally finite)%
    \footnote{In what follows the words `locally finite' defining a
      transport plan will often be dropped.} %
    transport plan} with marginals $\mu_0$ and~$\mu_1$ is a locally
  finite measure~$\gamma$ on $\rset\times\rset$ such that
  \begin{romannum}
  \item\label{item:1} for any $x$ in~$\rset$ the supports of measures
    $\gamma((-\infty, x]\times\cdot)$ and $\gamma(\cdot\times(-\infty,
    x])$ are bounded from above and the supports of measures
    $\gamma((x, \infty)\times\cdot)$, $\gamma(\cdot\times(x, \infty))$
    are bounded from below;
  \item\label{item:2} $\gamma(A\times\rset) = \mu_0(A)$ and
    $\gamma(\rset\times B) = \mu_1(B)$ for any Borel sets $A$, $B$.
  \end{romannum}
\end{definition}

The quantity $\gamma(A\times B)$ is the amount of mass transferred
from $A$ to~$B$ under the transport plan~$\gamma$. %
Condition~(\ref{item:1}) implies that the mass supported on any
bounded interval gets redistributed over a bounded set (indeed, a
bounded interval is the intersection of two half-lines), but is
somewhat stronger. %

\begin{definition}
  \label{def:optimal}
  A \emph{local modification} of the locally finite transport
  plan~$\gamma$ is a transport plan~$\gamma'$ such that $\gamma$
  and~$\gamma'$ have the same marginals and $\gamma' - \gamma$ is a
  compactly supported finite signed measure. %
  A local modification is called \emph{cost-reducing} if
  \begin{displaymath}
    \iint c(x, y)\, (\gamma'(\diff x\times\diff y) - \gamma(\diff
    x\times\diff y)) < 0.
  \end{displaymath}
  A locally finite transport plan $\gamma$ is said to be \emph{locally
    optimal} with respect to the cost function~$c$ or
  \emph{$c$-locally optimal} if it has no cost-reducing local
  modifications. %
\end{definition}

\section{Conjugate transport plans and shifts}
\label{sec:conjugate}

Let $U_0$,~$U_1$ be two copies of~$\rset$ equipped with positive
periodic measures $\mu_0$,~$\mu_1$ whose distribution functions
$F_0$,~$F_1$ satisfy~\eqref{eq:8}, so that all intervals of unit
length have unit mass. %
Let furthermore $V_0$,~$V_1$ be two other copies of~$\rset$ equipped
with the uniform (Lebesgue) measure. %

\subsection{Normal plans and conjugation}
\label{sec:norm-plans-conj}

We introduce the following terminology:
\begin{definition}
  \label{def:normal}
  A locally finite transport plan~$\nu$ on~$V_0\times V_1$ with uniform
  marginals is called \emph{normal}. %
\end{definition}

\begin{definition}
  \label{def:conjugate}
  For a normal transport plan~$\nu$ its \emph{conjugate transport
    plan} $\nu^{[F_0, F_1]}$ is a transport plan on $U_0\times
  U_1$ such that for any Borel sets~$A$,~$B$
  \begin{equation}
    \label{eq:10}
    \nu^{[F_0,F_1]}(A\times B) = \nu(F_0(A)\times F_1(B)).
  \end{equation}
\end{definition}

\begin{lemma}
  \label{lem:conjugate}
  For a normal transport plan $\nu$ its conjugate $\nu^{[F_0,F_1]}$ is
  a locally finite transport plan on $U_0\times U_1$ with marginals
  $\mu_0$, $\mu_1$. %
\end{lemma}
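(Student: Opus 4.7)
The strategy is to recognize the conjugate plan as a pushforward. Consider the Borel map $\Phi = (F_0^{-1}, F_1^{-1}) \colon V_0\times V_1 \to U_0\times U_1$. The plan is to rewrite \eqref{eq:10} as the change-of-variables formula $\nu^{[F_0, F_1]}(E) = \nu(\Phi^{-1}(E))$, so that $\nu^{[F_0, F_1]} = \Phi_*\nu$ is literally the pushforward. The three required properties — being a Borel measure, having the correct marginals, and being locally finite in the sense of Definition~\ref{def:finite}(\ref{item:1}) — then fall out of standard pushforward machinery together with the specific regularity of $F_0^{-1}$ and $F_1^{-1}$.

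The first step is to verify that for every Borel $A\subseteq \rset$ the set $F_0(A) = \bigcup_{x\in A} [F_0(x-0), F_0(x)]$ coincides with $(F_0^{-1})^{-1}(A)$ up to a Lebesgue-null set. Using the characterization \eqref{eq:9} and the right-continuity of $F_0$ and $F_0^{-1}$, the two sets can differ only at the at most countably many endpoints of jump intervals of $F_0$. This identifies \eqref{eq:10} with $\nu^{[F_0, F_1]}(A\times B) = \nu(\Phi^{-1}(A\times B))$ on the generating $\pi$-system of Borel rectangles, and hence $\nu^{[F_0, F_1]} = \Phi_*\nu$ everywhere by the monotone class theorem.

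The marginals of a pushforward are the pushforwards of the marginals. The marginals of $\nu$ are Lebesgue on $V_0$ and $V_1$, so I would compute $(F_0^{-1})_*\mathrm{Leb}$ on half-open intervals $(a,b]$: by \eqref{eq:7} one has $\mathrm{Leb}(F_0((a,b])) = F_0(b) - F_0(a) = \mu_0((a,b])$, with the jump intervals $F_0(\{x\})$ for atomic $x\in(a,b]$ contributing exactly the mass of those atoms. Since half-open intervals generate the Borel $\sigma$-algebra, this gives $(F_0^{-1})_*\mathrm{Leb} = \mu_0$, and symmetrically for $\mu_1$. For local finiteness, fix $x_0\in\rset$ and set $v_0 = F_0(x_0)$. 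Condition (\ref{item:1}) applied to $\nu$ at $v_0$ furnishes a $v_1^*\in\rset$ such that $\nu((-\infty,v_0]\times (v_1^*,\infty)) = 0$. Choosing $y^* > F_1^{-1}(v_1^*)$, one checks $F_1((y^*,\infty)) \subseteq (v_1^*, \infty)$, so \eqref{eq:10} yields $\nu^{[F_0,F_1]}((-\infty, x_0]\times (y^*,\infty)) = 0$. The other three boundedness assertions of Definition~\ref{def:finite}(\ref{item:1}) follow by the same argument with the roles of $0$/$1$ and of $(-\infty, \cdot]$/$(\cdot,\infty)$ interchanged.

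The main obstacle is the bookkeeping around atoms of $\mu_0$, $\mu_1$ (jumps of the $F_i$) and vacua (flat pieces of the $F_i$, equivalently jumps of the $F_i^{-1}$). The convention $F_i(\{x\}) = [F_i(x-0), F_i(x)]$ is tailor-made to make $\Phi$ "see" the whole vertical jump of $F_i$ in the complete graph, but it forces the identification $F_i(A) \leftrightarrow (F_i^{-1})^{-1}(A)$ to hold only modulo the countable collection of jump endpoints. The proof therefore needs to confirm, at each step, that these exceptional sets are Lebesgue-null (hence $\nu$-null on rectangles involving them) so that every pushforward identity is preserved; with that verification in place, the rest is routine.
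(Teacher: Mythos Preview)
Your proof is correct. The paper's own argument is much terser: it simply notes that $F_0$, $F_1$ and their inverses preserve boundedness (which dispatches condition~(\ref{item:1})), and then verifies condition~(\ref{item:2}) by the one-line computation
\[
\nu^{[F_0,F_1]}((u_1,u_2]\times U_1)=\nu([F_0(u_1),F_0(u_2)]\times V_1)=F_0(u_2)-F_0(u_1)=\mu_0((u_1,u_2]),
\]
extending from half-open intervals to all Borel sets. Your route through the pushforward $\Phi_*\nu$ with $\Phi=(F_0^{-1},F_1^{-1})$ is organizationally different but ends in the same computations; what it buys you is an explicit reason why formula~\eqref{eq:10}, stated only on rectangles, really does define a Borel measure---a point the paper takes for granted. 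The bookkeeping you flag around atoms and vacua (the identification $F_i(A)=(F_i^{-1})^{-1}(A)$ modulo a countable, hence Lebesgue-null, set) is exactly the care needed to make the pushforward identification honest, and since the marginals of $\nu$ are Lebesgue this null set is indeed $\nu$-negligible on rectangles.
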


\begin{proof}
  Since distribution functions $F_0$, $F_1$ and their inverses
  preserve boundedness, condition~(\ref{item:1}) of
  Definition~\ref{def:finite} is fulfilled. %
  Definition~\ref{def:conjugate}, condition~(\ref{item:2}) of
  Definition~\ref{def:finite}, and formula~\eqref{eq:7} together imply
  that
  \begin{displaymath}
    \begin{split}
      \nu^{[F_0,F_1]}((u_1, u_2]\times U_1)
      &= \nu(F_0((u_1, u_2])\times F_1(U_1))
      = \nu([F_0(u_1), F_0(u_2)]\times V_1) \\
      &= F_0(u_2) - F_0(u_1) = \mu_0((u_1, u_2]).
    \end{split}
  \end{displaymath}
  Similarly $\nu^{[F_0,F_1]}(U_0\times (u_1, u_2]) = \mu_1((u_1,
  u_2])$. %
  Thus $\nu^{[F_0, F_1]}$ satisfies condition~(\ref{item:2}) of
  Definition~\ref{def:finite} on intervals and therefore on all Borel
  sets. %
\end{proof}

\begin{lemma}
  \label{lem:reverse}
  For any transport plan $\gamma$ on~$U_0\times U_1$ with marginals
  $\mu_0$ and~$\mu_1$ there exists a normal transport plan~$\nu$ such
  that $\gamma$ is conjugate to~$\nu$: $\gamma = \nu^{[F_0,F_1]}$.%
\end{lemma}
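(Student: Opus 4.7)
The plan is to construct $\nu$ explicitly by ``spreading'' each mass element of $\gamma$ uniformly over the corresponding rectangle $F_0(\{x\})\times F_1(\{y\})$ in $V_0\times V_1$, realizing $\nu$ as the pushforward of $\gamma$ (augmented by two auxiliary Lebesgue factors on~$[0,1]$) under a map that blows up each atom of $\mu_i$ into the corresponding jump interval of~$F_i$.

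Concretely, I would introduce the ``spreading'' maps $\psi_i\colon U_i\times[0,1]\to V_i$, $i=0,1$, defined by
\begin{displaymath}
  \psi_i(x,s) = F_i(x^-) + s\,\bigl(F_i(x)-F_i(x^-)\bigr),
\end{displaymath}
so that for a non-atom $x$ of $\mu_i$ this collapses to $\psi_i(x,s)=F_i(x)$, while for an atom it parameterizes $F_i(\{x\}) = [F_i(x^-), F_i(x)]$ linearly in~$s$. Setting $\Psi(x,y,s,t) = (\psi_0(x,s),\psi_1(y,t))$, I would then define
\begin{displaymath}
  \nu = \Psi_*\bigl(\gamma \otimes \mathrm{Leb}_{[0,1]} \otimes \mathrm{Leb}_{[0,1]}\bigr).
\end{displaymath}

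The key technical step is to verify the pushforward identity $(\psi_i)_*(\mu_i \otimes \mathrm{Leb}_{[0,1]}) = \mathrm{Leb}_{V_i}$. Splitting $\mu_i$ into atomic and non-atomic parts, the atomic part contributes Lebesgue on the union of jump intervals (one factor of $\mu_i(\{x\})$ cancels against the length of $F_i(\{x\})$ by a change of variable in~$s$), while the non-atomic part contributes Lebesgue on the complementary set via the continuous monotone portion of~$F_i$; the two pieces together recover $\mathrm{Leb}_{V_i}$ up to a set of Lebesgue measure zero. From this, both marginals of $\nu$ are Lebesgue because $\gamma$ has marginals $\mu_0$ and~$\mu_1$. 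Local finiteness of $\nu$ is then immediate, and condition~(\ref{item:1}) of Definition~\ref{def:finite} transfers from $\gamma$ to $\nu$ because each $\psi_i$ is monotone and sends bounded sets to bounded sets.

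Finally, the defining identity $\nu(F_0(A)\times F_1(B)) = \gamma(A\times B)$ of Definition~\ref{def:conjugate} reduces, after unrolling the pushforward, to the observation that for $\mu_i$-a.e.\ $x$ the Lebesgue measure of $\{s\in[0,1] : \psi_i(x,s)\in F_i(A)\}$ equals~$1$ when $x\in A$ and~$0$ otherwise, since the image $\psi_i(x,[0,1])$ is exactly~$F_i(\{x\})$. The only delicate point throughout is bookkeeping around the endpoints $\{F_i(x^-), F_i(x)\}$ of jump intervals and around vacua of~$\mu_i$, where the correspondence between $U_i$ and $V_i$ fails to be one-to-one; but all such exceptions lie in a $\mathrm{Leb}_{V_i}$-null set (respectively, a $\mu_i$-null set), so the a.e.\ identifications used above are harmless, and the remaining verifications are routine unwinding of definitions.
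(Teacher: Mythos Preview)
Your construction is correct and rests on the same idea as the paper's proof: spread each atom of $\mu_i$ uniformly over the corresponding jump interval $F_i(\{x\})$, so that the resulting plan on $V_0\times V_1$ has Lebesgue marginals and collapses back to~$\gamma$ under~\eqref{eq:10}. The paper carries this out by disintegrating $\gamma$ against the atoms of~$\mu_0$, building an intermediate plan $\kappa$ on $V_0\times U_1$, and then repeating for the second factor; you instead introduce two auxiliary $[0,1]$-valued ``randomizers'' $s,t$ and realize $\nu$ in one stroke as the pushforward $\Psi_*(\gamma\otimes\mathrm{Leb}\otimes\mathrm{Leb})$. Your packaging is slightly cleaner---it treats both coordinates symmetrically and avoids the explicit conditional measures $\rho(\cdot\mid u_i)$---while the paper's two-step version makes the coordinate-wise structure more visible. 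Either way the substantive verification is the same: the key identity $(\psi_i)_*(\mu_i\otimes\mathrm{Leb}_{[0,1]}) = \mathrm{Leb}_{V_i}$, which you correctly reduce to the cancellation of atom mass against jump length on the atomic part and the standard pushforward of the non-atomic part of $\mu_i$ by~$F_i$.
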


\begin{proof}
  For non-atomic measures $\mu_0$ and~$\mu_1$ the required transport
  plan is given by the formula $\nu(A\times B) =
  \gamma(F_0^{-1}(A)\times F_1^{-1}(B))$, which is dual
  to~\eqref{eq:10}. %
  However if, e.g., $\mu_0$ has an atom, then the function $F_0^{-1}$
  is constant over a certain interval and maps any subset~$A$ of this
  interval into one point of \emph{fixed} positive measure in~$U_0$, so
  information on the true Lebesgue measure of~$A$ is lost. %
  In this case extra care has to be taken. %
  
  Recall that a locally finite measure has at most a countable set of
  atoms. %
  Let atoms of~$\mu_0$ be located in $(0, 1]$ at points $u_1$, $u_2$,
  \dots\ with masses $m_1$, $m_2$, \dots\,. %
  Since $\gamma(\{u_i\}\times U_1) = \mu_0(\{u_i\}) = m_i > 0$, there
  exists a conditional probability measure $\rho(\cdot\mid u_i) =
  \gamma(\{u_i\}\times \cdot)/m_i$. %
  For a set $A\subset (0, 1]$ define a ``residue'' transport plan
  \begin{displaymath}
    \bar\gamma(A\times B) = \gamma(A\times B)
    - {\textstyle\sum_i}\, m_i\,\delta_{u_i}(A)\, \rho(B\mid u_i),
  \end{displaymath}
  where $\delta_u$ is the Dirac unit mass measure on~$U_0$ concentrated
  at~$u$, and extend $\bar\gamma$ to general~$A$ using periodicity. %
  We thus remove from~$\bar\gamma$ the part of~$\gamma$ whose
  projection to the first factor is atomic. %
  Define a transport plan $\kappa$ on $V_0\times U_1$ by
  \begin{displaymath}
    \kappa(C\times B)
    = {\textstyle\sum_i}\, \lambda(C\cap F_0(\{u_i\}))\, \rho(B\mid u_i)
    + \bar\gamma(F_0^{-1}(C)\times B),
  \end{displaymath}
  where $C$ is a Borel set in~$V_0$ and $\lambda(\cdot)$ denotes the
  Lebesgue measure in~$V_0$. %
  Clearly $\kappa(F_0(A)\times B) = \gamma(A\times B)$. %
  Repeating this construction for the second factor, with $\kappa$ in
  place of~$\gamma$, we get a normal transport plan~$\nu$ such that
  $\gamma(A\times B) = \nu(F_0(A)\times F_1(B))$.
\end{proof}

Since we are ultimately interested in transport optimization with
marginals $\mu_0$,~$\mu_1$ rather than with uniform marginals, two
normal transport plans $\nu_1$, $\nu_2$ will be called
\emph{equivalent} if they have the same conjugate. %
Two different normal transport plans can only be equivalent if one or
both measures $\mu_0$ or~$\mu_1$ have atoms, causing loss of
information on the structure of $\nu$ in segments corresponding to
these atoms. %
The proof of Lemma~\ref{lem:reverse} gives a specific representative
of this equivalence class of normal plans. %

\subsection{Locally optimal normal transport plans are shifts}
\label{sec:locally-optim-plans}

Fix a cost function $c\colon U_0\times U_1\to \rset$ that satisfies the
Monge condition~\eqref{eq:4} and define
\begin{equation}
  \label{eq:11}
  c_{[F_0, F_1]}(v_0, v_1) = c\bigl(F_0^{-1}(v_0), F_1^{-1}(v_1)\bigr).
\end{equation}
For non-atomic measures $\mu_0$,~$\mu_1$, it satisfies the Monge
condition
\begin{displaymath}
  c_{[F_0, F_1]}(v', w') + c_{[F_0, F_1]}(v'', w'')
  - c_{[F_0, F_1]}(v', w'') - c_{[F_0, F_1]}(v'', w') < 0
\end{displaymath}
whenever $v' < v''$ and $w' < w''$; this inequality can only turn into
equality if either $v', v''$ or $w', w''$ correspond to an atom of the
respective marginal ($\mu_0$ or $\mu_1$) of~$\nu^{[F_0, F_1]}$, i.e.,
if $c_{[F_0, F_1]}$ is constant in either first or second argument. %
In spite of this slight violation of definition of
\S\ref{sec:cost-function}, we will still call $c_{[F_0, F_1]}$ a
cost function. %

Here and below, variables $u$, $u'$, $u_0$, $u_1$, \ldots\ are assumed
to take values in~$U_0$ or~$U_1$ and variables $v$, $v'$, $v_0$,
$v_1$, \dots, $w$, $w'$, \dots, in~$V_0$ or~$V_1$. %

\begin{lemma}
  \label{lem:optimal}
  A transport plan $\gamma$ on $U_0\times U_1$ with marginals $\mu_0$,
  $\mu_1$ is $c$-locally optimal if and only if it is conjugate to a
  $c_{[F_0, F_1]}$-locally optimal normal transport plan $\nu$. %
  In particular, all normal transport plans with the same locally
  optimal conjugate are locally optimal.
\end{lemma}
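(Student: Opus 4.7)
The plan is to lift the question of local optimality from $U_0\times U_1$ to $V_0\times V_1$ via conjugation: I would show that this correspondence (up to the equivalence described after Lemma~\ref{lem:reverse}) is cost-increment preserving on compactly supported perturbations, and read off the equivalence from a computation of cost changes.

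The first step is a change-of-variables identity
\begin{displaymath}
  \iint_{U_0\times U_1} c(u_0, u_1)\,\sigma^{[F_0, F_1]}(\diff u_0\times\diff u_1)
  = \iint_{V_0\times V_1} c_{[F_0, F_1]}(v_0, v_1)\,\sigma(\diff v_0\times\diff v_1)
\end{displaymath}
for compactly supported signed measures $\sigma$ on $V_0\times V_1$ with zero marginals, where $\sigma^{[F_0, F_1]}$ is defined by extending~\eqref{eq:10} to signed measures via Jordan decomposition. I would verify it first for product indicators $\chi_A\otimes\chi_B$, comparing $\sigma^{[F_0,F_1]}(A\times B) = \sigma(F_0(A)\times F_1(B))$ with the identity $\{v_i\colon F_i^{-1}(v_i)\in A\} = F_i(A)$, which holds modulo a Lebesgue-null set of endpoints of intervals $F_i(\{u\})$; the uniform marginals of normal plans (inherited by $\sigma$ as zero marginals) kill contributions from those endpoints. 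Monotone-class/approximation arguments then extend the identity to bounded Borel integrands, in particular to $c$ restricted to the bounded support of~$\sigma^{[F_0, F_1]}$.

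Next I translate local modifications in both directions. Given $\gamma$, choose $\nu$ with $\nu^{[F_0, F_1]} = \gamma$ via Lemma~\ref{lem:reverse}. If $\nu''$ is a local modification of~$\nu$, then Lemma~\ref{lem:conjugate} ensures that $\gamma'' := (\nu'')^{[F_0, F_1]}$ is a transport plan with marginals $\mu_0,\mu_1$ differing from $\gamma$ only on the image of the bounded support of $\nu''-\nu$, and its cost differential equals that of $\nu''-\nu$ by the change-of-variables identity. Conversely, given a local modification~$\gamma'$ of~$\gamma$, I would apply the construction from the proof of Lemma~\ref{lem:reverse} to produce a normal plan $\nu'$ with $(\nu')^{[F_0, F_1]} = \gamma'$, reusing the same conditional measures $\rho(\cdot\mid u_i)$ chosen when building $\nu$ from~$\gamma$, at every atom $u_i$ of $\mu_0$ or~$\mu_1$ lying outside the support of $\gamma'-\gamma$. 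Since that construction depends on the underlying transport plan only locally at each atom (the residue $\bar\gamma$ and Lebesgue pieces are assembled column by column), $\nu'$ and $\nu$ then agree outside a bounded subset of $V_0\times V_1$, so $\nu'$ is a bona fide local modification of $\nu$ with matching cost increment. These two directions together give the stated equivalence.

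The \emph{in particular} clause follows because the forward direction above uses only the relation $\tilde\nu^{[F_0,F_1]} = \gamma$, not the specific $\nu$ produced by Lemma~\ref{lem:reverse}: for any normal plan $\tilde\nu$ conjugate to~$\gamma$, a cost-reducing modification of $\tilde\nu$ would conjugate to a cost-reducing modification of~$\gamma$, so $c$-local optimality of~$\gamma$ forces $c_{[F_0,F_1]}$-local optimality of $\tilde\nu$. I expect the main obstacle to be the reverse direction: verifying that the atom-handling in the Lemma~\ref{lem:reverse} construction can be carried out \emph{coherently} for $\gamma$ and~$\gamma'$ so that the resulting normal plans differ only on a bounded set. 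This reduces to checking the purely local (column-by-column at each atom, then row-by-row) nature of that construction, which the proof of Lemma~\ref{lem:reverse} already makes manifest.
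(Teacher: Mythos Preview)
Your proposal is correct and follows the same approach as the paper: the change-of-variables identity for the cost integrals together with the observation that compact support is preserved under conjugation in both directions. The paper's proof is much terser---it handles the reverse direction and the atom issue in one remark (jumps of $F_i$ are ``harmless because $c_{[F_0,F_1]}$ is constant over respective ranges of its variables''), whereas you spell out the coherent reuse of conditional measures from Lemma~\ref{lem:reverse}; your extra care is sound but not strictly needed once one notes that any two equivalent normal preimages of $\gamma'$ have identical $c_{[F_0,F_1]}$-cost on each atom-interval.
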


\begin{proof}
  Note that $\nu' - \nu$ is compactly supported if and only if the
  difference of the respective conjugates $\gamma' - \gamma$ is
  compactly supported. %
  The rest of the proof follows from the identity
  \begin{displaymath}
    \begin{split}
      &\iint c_{[F_0, F_1]}(v_1, v_2)\, \bigl(\nu'(\diff v_1\times\diff v_2) -
      \nu(\diff v_1\times\diff v_2)\bigr) \\
      &\quad = \iint c(u_1, u_2)\, \bigl(\gamma'(\diff u_1\times\diff
      u_2) - \gamma(\diff u_1\times\diff u_2)\bigr)
    \end{split}
  \end{displaymath}
  established by the change of variables $v_1 = F_0(u_1)$, $v_2 =
  F_1(u_2)$ (here jumps of the distribution functions are harmless
  because $c_{[F_0, F_1]}$ is constant over respective ranges of its
  variables).
\end{proof}

Transport optimization with marginals $\mu_0$,~$\mu_1$ is thus reduced
to a \emph{conjugate problem} involving uniform marginals and the
cost~$c_{[F_0, F_1]}$. %
It turns out that any $c_{[F_0, F_1]}$-optimal normal transport plan
must be supported on a graph of a monotone function, and due to
uniformity of marginals this function can only be a shift by a
suitable real increment $\theta$. %
More precisely, the following holds:
\begin{theorem}
  \label{thm:shifts}
  Let $\mu_0$, $\mu_1$ be two periodic positive measures defined respectively on~$U_0$,~$U_1$ with
  unit mass in each period and let $F_i\colon U_i\to V_i$, $i = 0, 1$, be their
  distribution functions. %
  Then any $c_{[F_0, F_1]}$-locally optimal normal transport plan
  on~$V_0\times V_1$ is equivalent to a normal transport plan~$\nu_\theta$
  with $\mathop{\mathrm{supp}}\nu_\theta = \{(v, w)\colon w = v +
  \theta\}$, and conversely $\nu_\theta$ is $c_{[F_0, F_1]}$-locally
  optimal for any real~$\theta$. %
  All $c$-locally optimal transport plans on~$U_0\times U_1$ with
  marginals $\mu_0$,~$\mu_1$ are of the form $\gamma_\theta =
  (\nu_\theta)^{[F_0, F_1]}$. %
\end{theorem}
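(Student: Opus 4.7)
The plan is to reduce the problem to a statement about normal plans via Lemma~\ref{lem:optimal}, which identifies $c$-locally optimal plans on $U_0\times U_1$ with conjugates of $c_{[F_0,F_1]}$-locally optimal normal plans on $V_0\times V_1$. The theorem then amounts to two assertions: (a)~every shift $\nu_\theta$ is $c_{[F_0,F_1]}$-locally optimal, and (b)~every $c_{[F_0,F_1]}$-locally optimal normal plan is equivalent to some $\nu_\theta$. Once (a) and~(b) are in hand, the identification of all $c$-locally optimal plans on $U_0\times U_1$ with the family $\gamma_\theta=(\nu_\theta)^{[F_0,F_1]}$ follows directly from Lemmas~\ref{lem:conjugate} and~\ref{lem:reverse}.

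For~(b), the key step is a measure-theoretic monotonicity of the support: $\mathop{\mathrm{supp}}\nu$ cannot contain two points $(v',w')$ and $(v'',w'')$ with $v'<v''$ and $w'>w''$ unless $c_{[F_0,F_1]}$ is constant in one of its arguments on the bounding rectangle (which happens only on atom plateaus of $\mu_0$ or~$\mu_1$). The proof is the standard swap argument in measure-theoretic dressing: pick small disjoint neighborhoods of the four points carrying positive $\nu$-mass, and build a compactly supported signed measure $\delta$ with zero marginals that reroutes a small mass $\varepsilon$ from the crossing pattern to the parallel pattern; by lower semicontinuity of $c_{[F_0,F_1]}$ together with the strict Monge inequality~\eqref{eq:4}, this modification strictly reduces cost once the neighborhoods are small enough, contradicting local optimality.

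Monotonicity combined with uniformity of the marginals is rigid: such a plan must be concentrated on the graph of a monotone map $T$ with $T(b)-T(a)=b-a$ (since $\nu([a,b]\times\rset)=\nu(\rset\times[T(a),T(b)])$), hence $T(v)=v+\theta$ for some real~$\theta$. On atom-induced plateaus where monotonicity was not forced, $\mathop{\mathrm{supp}}\nu$ may legitimately deviate from this shift graph, but such deviations leave the conjugate $\nu^{[F_0,F_1]}$ invariant and are precisely the ambiguity absorbed by the equivalence relation on normal plans introduced at the end of \S\ref{sec:norm-plans-conj}, so $\nu$ is equivalent to some~$\nu_\theta$.

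For~(a), given any compactly supported signed measure $\delta$ with zero marginals and $\nu_\theta+\delta\ge 0$, I would use the Jordan decomposition $\delta=\delta^+-\delta^-$ to reduce the problem to showing that $\delta^-$---which is automatically concentrated on $\mathop{\mathrm{supp}}\nu_\theta$ and hence furnishes a monotone coupling of its two marginals---minimizes $c_{[F_0,F_1]}$-cost among all couplings with the same marginals, in particular against $\delta^+$. This is a classical one-dimensional optimality statement for Monge costs, provable by approximating the perturbation by a finite sequence of elementary four-point swaps, each of which replaces a crossing by a parallel pattern and reduces cost by~\eqref{eq:4}. The main obstacle throughout is the rigorous measure-theoretic implementation of these swap constructions, and the careful accounting for atoms, where \eqref{eq:4} degenerates to equality and uniqueness must be restored only up to the equivalence relation on normal plans.
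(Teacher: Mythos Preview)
Your overall plan is sound and reaches the theorem by a route that differs from the paper's in its machinery. The paper does not work directly with $\mathop{\mathrm{supp}}\nu$ nor with the Jordan decomposition; instead it introduces, for an \emph{arbitrary} normal plan~$\nu$, the flux functions $r_\nu(v,w)=\nu((-\infty,v]\times(w,\infty))$ and $l_\nu(v,w)=\nu((v,\infty)\times(-\infty,w])$ and the balance point $w_\nu(v)$ where they agree (Lemmas~\ref{lem:fg}--\ref{lem:m}). It then builds an explicit local modification that drives the crossing mass $m_\nu(v)=r_\nu(v,w_\nu(v))$ to zero at one~$v$ (Lemma~\ref{lem:onex}), iterates over a dense set to obtain a modification supported on the graph of~$w_\nu$ (Lemma~\ref{lem:monotone}), and observes that uniform marginals force $w_\nu(v)=v+\theta_\nu$. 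Both directions then follow from this single construction: applying it to any local modification of~$\nu_\theta$ returns~$\nu_\theta$ itself (Lemma~\ref{lem:reverserotation}), and applying it to a locally optimal~$\nu$ yields the equivalent shift (Lemma~\ref{lem:rotation}). Your approach trades this unified mechanism for two separate and more classical arguments---support monotonicity by local swaps for~(b), Jordan decomposition plus optimality of the comonotone coupling for~(a); it is conceptually lighter and closer to standard optimal-transport reasoning, while the paper's route is fully self-contained and has the side benefit of assigning a rotation number~$\theta_\nu$ to \emph{every} normal plan, not only locally optimal ones. One caution on your swap for~(b): lower semicontinuity of $c_{[F_0,F_1]}$ together with ``small enough neighborhoods'' does not by itself control the cost on the \emph{target} rectangles from above; the robust device---which is exactly what the paper implements inside Lemma~\ref{lem:onex}---is to glue the two pieces of mass through a coupling on the four-fold product so that the cost change becomes the integral of the pointwise Monge difference~\eqref{eq:4}, after which no smallness or continuity is needed.
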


The proof, divided into a series of lemmas, is based on the classical
argument: a nonoptimal transport plan can be modified by ``swapping''
pieces of mass to render its support monotone while decreasing its
cost. %
This argument, carried out for plans with uniform marginals
on~$V_0\times V_1$, is combined with the observation that a monotonicaly
supported plan with uniform marginals can only be a shift. %
Then Lemma~\ref{lem:optimal} is used to extend this result to
transport plans on~$U_0\times U_1$.

Throughout the proof fix a normal transport plan~$\nu$ and define on $V_0\times V_1$ the functions
\begin{equation}
  \label{eq:12}
  r_\nu(v, w) = \nu((-\infty, v]\times (w, \infty)),\quad
  l_\nu(v, w) = \nu((v, \infty)\times (-\infty, w]).
\end{equation}
To explain the notation $r_\nu$,~$l_nu$ observe that, e.g., $r_\nu(v, w)$
is the amount of mass that is located initially to the left of~$v$ and
goes to the \emph{right} of~$w$. 

\begin{lemma}
  \label{lem:fg}
  The function $r_\nu$ (resp.\ $l_\nu$) is continuous and
  monotonically increasing in its first (second) argument and is
  continuous and monotonically decreasing in its second (first)
  argument, while the other argument is kept fixed. %
\end{lemma}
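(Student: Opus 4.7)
My plan is to reduce monotonicity to elementary set inclusion and to obtain continuity from the standard continuity of finite measures on monotone sequences of sets, exploiting uniformity of the marginals both to supply local finiteness and to ensure that $\nu$ assigns zero mass to horizontal and vertical lines.

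Monotonicity requires no work: as $v$ grows, $(-\infty,v]$ grows, and as $w$ grows, $(w,\infty)$ shrinks, so $r_\nu(v,w)=\nu((-\infty,v]\times(w,\infty))$ is increasing in~$v$ and decreasing in~$w$. The same argument, with the roles of the two factors interchanged, yields the monotonicity statements for $l_\nu$.

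For continuity in $v$ at a point $v_0$ (with $w$ fixed), I would write, for $h>0$, the one-sided increments
\begin{equation*}
  r_\nu(v_0+h,w)-r_\nu(v_0,w)=\nu\bigl((v_0,v_0+h]\times(w,\infty)\bigr),\quad
  r_\nu(v_0,w)-r_\nu(v_0-h,w)=\nu\bigl((v_0-h,v_0]\times(w,\infty)\bigr),
\end{equation*}
and let $h\downarrow 0$. The families on the right are decreasing sequences of sets shrinking respectively to $\emptyset$ and to $\{v_0\}\times(w,\infty)$. Both sit inside the slab $(v_0-1,v_0+1]\times V_1$, whose $\nu$-measure is $2$ because the first marginal is Lebesgue, so continuity of $\nu$ from above applies. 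The first limit is $0$; the second equals $\nu(\{v_0\}\times(w,\infty))$, bounded by $\nu(\{v_0\}\times V_1)=0$ since the first marginal is atomless. Hence $r_\nu(\cdot,w)$ is continuous at $v_0$. Continuity of $r_\nu(v,\cdot)$, and both continuity statements for $l_\nu$, follow by the same argument applied to the other coordinate, using uniformity of the second marginal to supply the slab $V_0\times(w_0-1,w_0+1]$ and its atomlessness to kill $\nu(V_0\times\{w_0\})$.

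The only delicate point is that $\nu$ is merely locally finite, so continuity from above cannot be invoked naively; one must first exhibit an enclosing set of finite $\nu$-measure before passing to the monotone limit. Uniformity of the marginals provides such an enclosure at exactly the right scale, and once this observation is in hand the rest is mechanical.
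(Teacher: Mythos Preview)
Your proof is correct and follows essentially the same approach as the paper: monotonicity by set inclusion, and continuity from the fact that the uniform marginals are atomless, so that $\nu$ assigns zero mass to vertical and horizontal lines. The paper argues continuity in the second argument first via the decomposition $\nu(V_0\times\cdot)=\nu((-\infty,v]\times\cdot)+\nu((v,\infty)\times\cdot)$, while you argue continuity in the first argument via slab estimates; these are minor organizational differences, and your explicit use of a finite-measure enclosing slab to justify continuity from above is, if anything, slightly more careful than the paper's presentation.
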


\begin{proof}
  Monotonicity is obvious from~\eqref{eq:12}. %
  To prove continuity observe that the second marginal of~$\nu$
  is uniform, which together with positivity of all involved
  measures implies that in the decomposition
  \begin{displaymath}
    \nu(V_0\times\cdot\,)
    = \nu((-\infty, v]\times \cdot\,) + \nu((v, \infty)\times\cdot\,),
  \end{displaymath}
  both measures in the right-hand side cannot have atoms. %
  This implies continuity of $r_\nu$,~$l_\nu$ with respect to the
  second argument. %
  A~similar proof holds for the first argument.
\end{proof}

\begin{lemma}
  \label{lem:m}
  For any $v$ there exist $w_\nu(v)$ and $m_\nu(v) \ge 0$ such that
  \begin{equation}
    \label{eq:13}
    r_\nu(v, w_\nu(v)) = l_\nu(v, w_\nu(v)) = m_\nu(v).
  \end{equation}
  The correspondence $v\mapsto w_\nu(v)$ is monotone: $w_\nu(v_1) \le w_\nu(v_2)\
  \text{for}\ v_1 < v_2$.
\end{lemma}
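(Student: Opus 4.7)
My plan is to establish both assertions by one brief calculation that exploits the uniformity of the two marginals of~$\nu$.

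For existence of $w_\nu(v)$, I fix $v$ and study the function $g_v(w) = r_\nu(v, w) - l_\nu(v, w)$ of $w$ alone, which is continuous by Lemma~\ref{lem:fg}. The key observation is that for $w_1 < w_2$ the increments of~$r_\nu$ and~$l_\nu$ in the second argument combine into a $\nu$-measure of a full vertical strip:
\begin{displaymath}
  g_v(w_2) - g_v(w_1) = -\nu((-\infty, v]\times(w_1, w_2]) - \nu((v, \infty)\times(w_1, w_2]) = -(w_2 - w_1),
\end{displaymath}
the last equality because the second marginal of~$\nu$ is uniform. Hence $g_v$ is affine with slope~$-1$. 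Combined with condition~(i) of Definition~\ref{def:finite}, which forces $l_\nu(v, w) = 0$ for~$w$ sufficiently small and $r_\nu(v, w) = 0$ for~$w$ sufficiently large, this singles out a unique zero $w_\nu(v)$ of~$g_v$. Setting $m_\nu(v) = r_\nu(v, w_\nu(v)) = l_\nu(v, w_\nu(v))$ then yields a nonnegative value by positivity of~$\nu$.

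For monotonicity I use the analogous identity in the first argument: fixing $w$ and letting $v_1 < v_2$, the differences of~$r_\nu$ and~$l_\nu$ in the first argument likewise reassemble into the $\nu$-measure of a full horizontal strip,
\begin{displaymath}
  g_{v_2}(w) - g_{v_1}(w) = \nu((v_1, v_2]\times(w, \infty)) + \nu((v_1, v_2]\times(-\infty, w]) = v_2 - v_1,
\end{displaymath}
by uniformity of the first marginal. Evaluating at $w = w_\nu(v_1)$ gives $g_{v_2}(w_\nu(v_1)) = v_2 - v_1 > 0$, and since $g_{v_2}$ is strictly decreasing with unique zero at $w_\nu(v_2)$, this forces $w_\nu(v_2) > w_\nu(v_1)$, which is in fact strictly stronger than what is claimed.

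There is no genuine obstacle in this argument; the only point needing mild care is verifying that each of the partial $\nu$-measures appearing above is finite, so that the rearrangement into a full strip is legitimate. This follows from combining the support-boundedness in Definition~\ref{def:finite}(i) with the fact that the uniform marginals of~$\nu$ are finite on bounded intervals. Once this finiteness is in place, the two affine identities reduce the lemma to a triviality, and in passing they yield a slightly stronger statement (uniqueness of $w_\nu(v)$ and strict monotonicity in~$v$) that will be useful in the sequel.
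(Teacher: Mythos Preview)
Your proof is correct, and it takes a genuinely different and sharper route than the paper's. The paper argues existence by the intermediate value theorem, invoking only the continuity and limiting behaviour of $r_\nu(v,\cdot)$ and $l_\nu(v,\cdot)$ from Lemma~\ref{lem:fg}; this does not exclude a whole segment of zeros, so the paper must adopt the convention of taking the leftmost one and then defer uniqueness to the Corollary after Lemma~\ref{lem:monotone}. For monotonicity the paper uses only the qualitative monotonicity of $r_\nu$, $l_\nu$ in the first argument, obtaining merely $w_\nu(v_1)\le w_\nu(v_2)$. Your approach instead exploits the \emph{uniformity} of both marginals to compute the increments of $g_v(w)=r_\nu(v,w)-l_\nu(v,w)$ exactly: slope $-1$ in~$w$ and shift $+(v_2-v_1)$ in~$v$. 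This buys you uniqueness of $w_\nu(v)$ and strict monotonicity outright, and in fact the two identities combined give $w_\nu(v_2)-w_\nu(v_1)=v_2-v_1$, i.e., the Corollary $w_\nu(v)=v+\theta_\nu$ follows immediately from your computation without passing through the local-modification machinery of Lemmas~\ref{lem:onex}--\ref{lem:monotone}. The paper's softer argument, by contrast, would survive in settings where the marginals are merely nonatomic rather than uniform, which is perhaps why it is phrased that way; but in the present normal-plan setting your quantitative version is both shorter and more informative.
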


\begin{proof}
  Clearly $r_\nu(v, -\infty) = \infty$, $r_\nu(v, \infty) = 0$,
  $l_\nu(v, -\infty) = 0$, $l_\nu(v, \infty) = \infty$. %
  The continuity of the functions $r_\nu(v, \cdot)$,~$l_\nu(v, \cdot)$
  in the second argument for a fixed~$v$ implies that their graphs
  intersect at some point~$(w_\nu(v), m_\nu(v))$, which
  satisfies~\eqref{eq:13}. %
  Should the equality $r_\nu(v, w) = l_\nu(v, w)$ hold on a segment
  $[w', w'']$, we set $w_\nu(v)$ to its left endpoint~$w'$; this
  situation, however, will be ruled out by the corollary to
  Lemma~\ref{lem:monotone} below. %
  Monotonicity of $w_\nu(v)$ follows from monotonicity of
  $r_\nu(\cdot, w)$, $l_\nu(\cdot, w)$ in the first argument for a
  fixed~$w$: indeed, for $v_2 > v_1$ the equality $r_\nu(v_2, w) =
  l_\nu(v_2, w)$ is impossible for $w < w_\nu(v_1)$ because for such
  $w$ we have $r_\nu(v_2, w) > r_\nu(v_1, w_\nu(v)) = l_\nu(v_1,
  w_\nu(v)) > l_\nu(v_2, w)$. %
\end{proof}

Equalities~\eqref{eq:13} mean that the same amount of mass~$m_\nu(v)$
goes under the plan~$\nu$ from the left of~$v$ to the right
of~$w_\nu(v)$ and from the right of~$v$ to the left of~$w_\nu(v)$. %
We are now in position to use the Monge condition and show that this
amount can be reduced to zero by modifying the transport plan locally
without a cost increase. %

\begin{lemma}
  \label{lem:onex}
  For any~$v$ there exists a local modification~$\nu_v$ of~$\nu$ such
  that $w_{\nu_v}(v) = w_\nu(v)$ (with $w_\nu$ defined as in
  Lemma~\ref{lem:m}), $m_{\nu_v}(v) = 0$, and $\nu_v$ is either
  cost-reducing in the sense of Definition~\ref{def:optimal} or is
  equivalent to~$\nu$.
\end{lemma}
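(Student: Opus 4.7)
The plan is to reduce the crossing mass $m_\nu(v)$ to zero by a ``swap'' operation. Setting $w:=w_\nu(v)$ and $m:=m_\nu(v)$, I treat $m=0$ trivially (take $\nu_v:=\nu$) and assume $m>0$. I would isolate the two crossing pieces of $\nu$,
\[
\nu_1 := \nu\big|_{(-\infty, v]\times(w, \infty)},\qquad
\nu_2 := \nu\big|_{(v, \infty)\times(-\infty, w]},
\]
which carry mass past $(v,w)$ in opposite senses. By~\eqref{eq:13} both have total mass $m$, and condition~(\ref{item:1}) of Definition~\ref{def:finite} ensures their supports are compact, so any modification built from them is automatically compactly supported.

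Let $\alpha,\beta$ and $\alpha',\beta'$ denote the first and second marginals of $\nu_1$ and $\nu_2$ respectively (each of mass $m$). I would then replace $\nu_1+\nu_2$ by the product couplings
\[
\nu_1' := \tfrac{1}{m}\,\alpha\otimes\beta',\qquad
\nu_2' := \tfrac{1}{m}\,\alpha'\otimes\beta,
\]
supported respectively in the ``monotone corners'' $(-\infty,v]\times(-\infty,w]$ and $(v,\infty)\times(w,\infty)$, and set $\nu_v := \nu - \nu_1 - \nu_2 + \nu_1' + \nu_2'$. Nonnegativity, matching of marginals, compactness of the support of $\nu_v-\nu$, and the identities $r_{\nu_v}(v,w)=l_{\nu_v}(v,w)=0$ (together with $r_{\nu_v}(v,w')>0$ for $w'<w$ and $l_{\nu_v}(v,w'')>0$ for $w''>w$, which pin down $w_{\nu_v}(v)=w$ and $m_{\nu_v}(v)=0$) all follow by routine inspection, exploiting the uniformity of the marginals of~$\nu$.

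For the cost comparison I would introduce the product measure $\Pi := \nu_1\otimes\nu_2$ on $(V_0\times V_1)^2$, of total mass $m^2$. Every point $\bigl((v_1,w_2),(v_2,w_1)\bigr)$ of $\operatorname{supp}\Pi$ satisfies $v_1\le v<v_2$ and $w_1\le w<w_2$, so the Monge inequality for $c_{[F_0, F_1]}$ holds at it with the correct sign. Integrating this inequality against $\Pi$ and applying Fubini identifies the four resulting iterated integrals as $m$ times the integrals of $c_{[F_0, F_1]}$ against $\nu_1',\nu_2',\nu_1,\nu_2$ respectively; dividing by $m$ then gives $\int c_{[F_0, F_1]}\,d(\nu_v-\nu)\le 0$.

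The main obstacle will be the dichotomy stated in the lemma: ``cost-reducing versus equivalent.'' As noted after~\eqref{eq:11}, the Monge inequality for $c_{[F_0, F_1]}$ is strict except when $F_0^{-1}$ is constant on both relevant first arguments or $F_1^{-1}$ is constant on both relevant second arguments. In the non-degenerate case, pointwise strictness on $\operatorname{supp}\Pi$ upgrades the integrated inequality to a strict one, and $\nu_v$ is cost-reducing. In the degenerate case, $\nu_v-\nu$ is entirely supported inside a single product ``plateau'' of $F_0^{-1}$ and $F_1^{-1}$, hence maps to the zero measure under the conjugation of Definition~\ref{def:conjugate}, so $(\nu_v)^{[F_0,F_1]}=\nu^{[F_0,F_1]}$ and $\nu_v$ is equivalent to~$\nu$. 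Making this case analysis precise is the one nontrivial piece of the argument; everything else amounts to bookkeeping with monotone supports and uniform marginals.
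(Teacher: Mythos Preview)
Your approach is correct and genuinely different from the paper's. The paper performs the swap by a monotone reparameterization: it introduces distribution-type functions $r_w,l_w$ on $(0,m)$ and, via disintegration of the crossing pieces, reassigns each ``labelled'' mass element $\alpha\in(0,m)$ to the destination of its counterpart on the other side of~$v$. You instead replace the crossing couplings $\nu_1,\nu_2$ by the \emph{product} couplings $\nu_1'=\tfrac1m\alpha\otimes\beta'$, $\nu_2'=\tfrac1m\alpha'\otimes\beta$ and derive the cost comparison by integrating the pointwise Monge inequality against $\Pi=\nu_1\otimes\nu_2$; Fubini then identifies the four terms directly with $m$ times the integrals of $c_{[F_0,F_1]}$ against $\nu_1',\nu_2',\nu_1,\nu_2$. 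This is shorter and avoids disintegration entirely, while the paper's construction has the (unused) virtue of being a deterministic reassignment of mass elements.

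One point deserves sharpening. In the degenerate case your claim that $\nu_v-\nu$ sits inside a \emph{single product plateau} $P\times Q$ is not quite right; the correct statement is that it sits inside the \emph{cross} $(P\times V_1)\cup(V_0\times Q)$, where $P,Q$ are the plateaus of $F_0^{-1},F_1^{-1}$ through $v,w$. Indeed, the $\Pi$-a.e.\ equality in Monge forces, for $\nu_1$-a.e.\ $(v_1,w_2)$, that $v_1\in P$ or $w_2\in Q$ (else pairing with any $(v_2,w_1)$ of type ``both outside'' gives strict inequality; such $(v_2,w_1)$ have positive $\nu_2$-mass unless the symmetric condition holds for $\nu_2$). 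The cross constraints $p_{FT}q_{TF}=p_{TF}q_{FT}=0$ that arise from mixed pairings then guarantee the product measures $\nu_1',\nu_2'$ also live in the cross. Under conjugation the cross collapses to $(\{x^*\}\times U_1)\cup(U_0\times\{y^*\})$, and a signed measure supported there with zero marginals is identically zero; hence $(\nu_v)^{[F_0,F_1]}=\nu^{[F_0,F_1]}$. The paper's own treatment of this dichotomy (``$c_{[F_0,F_1]}$ constant in at least one of its arguments'') is no more explicit than yours, so this is a refinement rather than a gap; your identification of it as the one nontrivial step is accurate.
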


\begin{proof}
  Let $w = w_\nu(v)$ and $m = m_\nu(v)$. %
  If $m = 0$, there is nothing to prove.  %
  Suppose that $m > 0$ and define
  \begin{displaymath}
    \begin{aligned}
      w^- &= \sup\{w'\colon l_\nu(v, w') = 0\},\quad&
      w^+ &= \inf\{w'\colon r_\nu(v, w') = 0\},\\
      v^- &= \sup\{v'\colon r_\nu(v', w) = 0\},\quad&
      v^+ &= \inf\{v'\colon l_\nu(v', w) = 0\}.
    \end{aligned}
  \end{displaymath}
  By local finiteness of the transport plan~$\nu$ all these quantities
  are finite. %
  Since $m > 0$, continuity of $r_\nu$,~$l_\nu$ implies that the
  inequalities $w^- < w < w^+$ and $v^- < v < v^+$ are strict. %
  Consider the measures
  \begin{displaymath}
    \begin{aligned}
      \rho^-(\cdot) &= \nu(\,\cdot\times(w, w^+))\ \text{on~$(v^-, v)$},\quad&
      \rho^+(\cdot) &= \nu(\,\cdot\times(w^-, w))\ \text{on~$(v, v^+)$},\\
      \sigma^-(\cdot) &= \nu((v, v^+)\times\cdot\,)\ \text{on~$(w^-, w)$},\quad&
      \sigma^+(\cdot) &= \nu((v^-, v)\times\cdot\,)\ \text{on~$(w, w^+)$}.
    \end{aligned}
  \end{displaymath}
  Equalities~\eqref{eq:13} mean that all these measures have the same
  positive total mass$~m$. %
  Note that the Lebesgue measures of intervals $(v^-, v)$, $(v, v^+)$,
  $(w^-, w)$, and $(w, w^+)$ may be greater than~$m$, because some
  mass in these intervals may come from or go to elsewhere. %

  The functions $r_w(\cdot) = r_\nu(\cdot, w)$, $l_v(\cdot) = l_\nu(v,
  \cdot)$ are monotonically increasing and $r_v(\cdot) = r_\nu(v,
  \cdot)$, $l_w(\cdot) = l_\nu(\cdot, w)$ are monotonically
  decreasing, with their inverses $r_w^{-1}$, $l_v^{-1}$, $r_v^{-1}$,
  $l_w^{-1}$ defined everywhere except on an at most countable set of
  points. %
  These functions may be regarded as a kind of distribution functions
  for the measures $\rho^-$, $\sigma^-$, $\sigma^+$, $\rho^+$
  respectively, mapping them to the Lebesgue measure on~$(0, m)$.

  Under the plan~$\nu$, mass~$m$ is sent from $(v^-, v)$ to $(w, w^+)$
  and from $(v, v^+)$ to $(w^-, w)$. %
  We now construct a local modification~$\nu_v$ of the transport
  plan~$\nu$ that moves mass~$m$ from the interval~$(v^-, v)$
  to~$(w^-, w)$ and from~$(v, v^+)$ to~$(w, w^+)$, and show that it is
  cost-reducing unless measures $\mu_0$,~$\mu_1$ have atoms
  corresponding to the intervals under consideration. %

  Observe first that the normal plan~$\nu$ induces two transport plans
  $\tau_r$,~$\tau_l$ that map measures $\rho^-$ to~$\sigma^+$ and
  $\rho^+$ to~$\sigma^-$ correspondingly:
  \begin{displaymath}
    \begin{gathered}
      \tau_r(A\times B)
      = \nu(A\cap(-\infty, v)\times B\cap(w, +\infty))
      = \nu(A\cap(v^-, v)\times B\cap(w, w^+)), \\
      \tau_l(A\times B)
      = \nu(A\cap(v, +\infty)\times B\cap(-\infty, w))
      = \nu(A\cap(v, v^+)\times B\cap(w^-, w)),
    \end{gathered}
  \end{displaymath}
  where $A\subset (v^-, v^+)$, $B\subset (w^-, w^+)$ are two arbitrary
  Borel sets and the $\cap$ operation takes precedence
  over~$\times$. %
  By an argument similar to the proof of Lemma~\ref{lem:reverse},
  there exist two transport plans $\chi_r$ and~$\chi_l$ mapping the
  Lebesgue measure on~$(0, m)$ respectively to $\sigma^+$,~$\sigma^-$
  and such that
  \begin{displaymath}
    \begin{gathered}
      \tau_r(A\times B) = \chi_r\bigl(r_w(A\cap(v^-, v))\times
      B\cap(w, w^+)\bigr), \\
      \tau_l(A\times B) = \chi_l\bigl(l_w(A\cap(v, v^+))\times
      B\cap(w^-, w)\bigr).
    \end{gathered}
  \end{displaymath}
  Define now two transport plans $\bar\tau_l$, $\bar\tau_r$ that send
  mass elements to the same destinations but from interchanged origins:
  \begin{displaymath}
    \begin{gathered}
      \bar\tau_r(A\times B) = \chi_r\bigl(l_w(A\cap(v,
      v^+))\times B\cap(w, w^+)\bigr), \\
      \bar\tau_l(A\times B) = \chi_l\bigl(r_w(A\cap(v^-, v))\times
      B\cap(w^-, w)\bigr).
    \end{gathered}
  \end{displaymath}
  This enables us to define
  \begin{displaymath}
    \nu_v(A\times B) = \nu(A\times B)
    - \tau_r(A\times B) - \tau_l(A\times B)
    + \bar\tau_r(A\times B) + \bar\tau_l(A\times B).
  \end{displaymath}
  Since $\tau_r(A\times\rset) = \bar\tau_r(A\times\rset) = \rho^-(A)$
  etc., the transport plan~$\nu_v$ has the same uniform marginals
  as~$\nu$, i.e., it is a local modification of~$\nu$. %
  Observe furthermore that by the construction of~$\nu_v$ no mass is
  moved under this plan from the left-hand side of~$v$ to the
  right-hand side of~$w$ and inversely, i.e., that $m_{\nu_v}(v) =
  0$. %

  It remains to show that $\nu_v$ is either a cost-reducing
  modification of~$\nu$ or equivalent to it. %
  By the disintegration lemma (see, e.g., \cite{Ambrosio.L:2005}) we
  can write $\chi_r(\diff\alpha\times\diff w') = \diff\alpha\; \diff
  G_r(w'\mid\alpha)$ and $\chi_l(\diff\alpha\times\diff w') =
  \diff\alpha\; \diff G_l(w'\mid\alpha)$, where
  $G_r(\,\cdot\mid\alpha)$ (resp.\ $G_l(\,\cdot\mid\alpha)$) are
  distribution functions of probability measures defined on $[w, w^+]$
  (resp.\ $[w^-, w]$) for almost all $0 < \alpha < m$. %
  Denote their respective inverses by $G_r^{-1}(\cdot\mid\alpha)$,
  $G_l^{-1}(\cdot\mid\alpha)$ and observe that $w^-\le
  G_l^{-1}(\beta'\mid\alpha) \le w\le G_r^{-1}(\beta''\mid\alpha) \le
  w^+$ for any $\beta'$,~$\beta''$. %
  Thus
  \begin{displaymath}
    \begin{split}
      \iint c(v', w')\; \tau_r(\diff v'\times\diff w') & = \iint
      c(r_w^{-1}(\alpha), w')\;
      \chi_r(\diff\alpha\times\diff w') \\
      & = \int_0^m\diff\alpha \int c(r_w^{-1}(\alpha),
      w')\, \diff G_r(w'\mid\alpha) \\
      & = \int_0^m\diff\alpha \int_0^1\diff\beta\ c(r_w^{-1}(\alpha),
      G_r^{-1}(\beta\mid\alpha)),
    \end{split}
  \end{displaymath}
  where we write $c$ instead of $c_{[F_0, F_1]}$ to lighten notation,
  and similarly
  \begin{displaymath}
  \begin{gathered}
    \iint c(v', w')\; \tau_l(\diff v'\times\diff w')
    = \int_0^m \diff\alpha \int_0^1 \diff\beta\
    c(l_w^{-1}(\alpha), G_l^{-1}(\beta\mid\alpha)),\\
    \iint c(v', w')\; \bar\tau_r(\diff v'\times\diff w')
    = \int_0^m \diff\alpha \int_0^1 \diff\beta\
    c(l_w^{-1}(\alpha), G_r^{-1}(\beta\mid\alpha)),\\
    \iint c(v', w')\; \bar\tau_l(\diff v'\times\diff w')
    = \int_0^m \diff\alpha \int_0^1 \diff\beta\
    c(r_w^{-1}(\alpha), G_l^{-1}(\beta\mid\alpha)).
  \end{gathered}
\end{displaymath}
  The integral in Definition~\ref{def:optimal} now takes the form
  \begin{displaymath}
    \begin{split}
      \iint c(v', w')\, (\nu_v(\diff v'&\times\diff w') -
      \nu(\diff v'\times\diff w')) \\
      \quad =\iint c(v', w')\, &\bigl(-\tau_r(\diff
      v'\times\diff w') - \tau_l(\diff v'\times\diff w') \\
      &\ + \bar\tau_r(\diff v'\times\diff w') + \bar\tau_l(\diff
      v'\times\diff w')\bigr) \\
      = \int_0^m\!\!\!\diff\alpha \int_0^1\!\!\diff\beta\; &
      \bigl(-c(r_w^{-1}(\alpha), G_r^{-1}(\beta\mid\alpha))
      - c(l_w^{-1}(\alpha), G_l^{-1}(\beta\mid\alpha)) \\
      &\ + c(l_w^{-1}(\alpha), G_r^{-1}(\beta\mid\alpha)) + 
      c(r_w^{-1}(\alpha), G_l^{-1}(\beta\mid\alpha))\bigr).
    \end{split}
  \end{displaymath}
  As $r_w^{-1}(\alpha) \le v \le l_w^{-1}(\alpha)$ and
  $G_l^{-1}(\beta\mid\alpha) \le w \le G_r^{-1}(\beta\mid\alpha)$ for
  all $\alpha$,~$\beta$, the Monge condition~\eqref{eq:4} implies that
  either the value of this integral is negative or 
  the function $c$ (i.e., $c_{[F_0, F_1]}$) is constant in at least
  one of its arguments. %
  In the former case the transport plan~$\nu_v$ is a cost-reducing
  local modification of~$\nu$; in the latter case $\nu_v$ is
  equivalent to~$\nu$. %
\end{proof}

\begin{lemma}
  \label{lem:monotone}
  For any $v' < v''$ there exists a local modification $\nu_{v',v''}$
  of~$\nu$ such that $w_{\nu_{v',v''}}(v) = w_\nu(v)$ for $v' \le v
  \le v''$, $m_{\nu_{v',v''}}(v') = m_{\nu_{v',v''}}(v'') = 0$, and in
  the strip $v' \le v \le v''$ the support of $\nu_{v',v''}$ coincides
  with the complete graph of the monotone function~$w_\nu(\cdot)$.
\end{lemma}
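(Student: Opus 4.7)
The plan is to construct $\nu_{v',v''}$ as the weak-$*$ limit of a sequence of local modifications obtained by iterating Lemma~\ref{lem:onex} at the points of a countable dense subset of~$[v',v'']$. The crucial observation, and the main technical difficulty, is that each application of Lemma~\ref{lem:onex}'s modification at a single point~$v_n$ preserves the crossing function $w_\nu(v_0)$ at \emph{every} other point~$v_0$, not merely at~$v_n$.

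To establish this, I unpack the perturbation $-\tau_r - \tau_l + \bar\tau_r + \bar\tau_l$ from Lemma~\ref{lem:onex}, using the disintegrations $\chi_r, \chi_l$ of that proof, as an integral of elementary swaps: pairs of mass elements $(u_1, z_2), (u_2, z_1)$ with $u_1 < v_n < u_2$ and $z_1 < w_n < z_2$ are rearranged into $(u_1, z_1), (u_2, z_2)$. A direct bookkeeping starting from~\eqref{eq:12} shows that each such swap changes $r_\nu(v_0, w_0)$ and $l_\nu(v_0, w_0)$ by identical amounts at every $(v_0, w_0)$: both are decreased by the swapped mass when $u_1 \le v_0 < u_2$ and $z_1 \le w_0 < z_2$, and both are unchanged otherwise. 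Consequently $r_\nu(v_0, \cdot) - l_\nu(v_0, \cdot)$ is preserved for every~$v_0$, so its zero $w_\nu(v_0)$ is unchanged, while $m_\nu(v_0)$ can only decrease.

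I then fix a countable dense sequence $\{v_n\}_{n \ge 1}$ in $[v', v'']$ containing both endpoints and set $\nu^{(0)} = \nu$, $\nu^{(n)} = (\nu^{(n-1)})_{v_n}$ via Lemma~\ref{lem:onex}. By the previous paragraph each step is cost-reducing or equivalent, preserves $w_\nu(\cdot)$ globally on~$[v',v'']$, and achieves $m_{\nu^{(n)}}(v_n) = 0$; monotone decay of~$m$ then yields $m_{\nu^{(n)}}(v_k) = 0$ for all~$k\le n$. Since all modifications are confined to a fixed compact neighborhood of the strip, the sequence is tight in total variation and converges weakly-$*$ to a locally finite measure $\nu_{v',v''}$ that differs from~$\nu$ by a compactly supported signed measure, and is therefore a local modification of~$\nu$, either cost-reducing or equivalent to it.

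In the limit $m_{\nu_{v',v''}}(v_n) = 0$ for every~$n$, which means $\nu_{v',v''}$ assigns zero mass to each ``anti-monotone'' region $((-\infty, v_n] \times (w_\nu(v_n), +\infty)) \cup ((v_n, +\infty) \times (-\infty, w_\nu(v_n)])$. Intersecting the complements over the dense set and using the monotonicity of~$w_\nu$, any support point $(u_0, w_0)$ of $\nu_{v',v''}$ with $u_0 \in [v', v'']$ must satisfy $w_\nu(u_0-) \le w_0 \le w_\nu(u_0+)$, i.e., lies on the complete graph of $w_\nu|_{[v',v'']}$; the reverse inclusion is forced by the uniform first marginal. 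In particular the endpoint conditions $m_{\nu_{v',v''}}(v') = m_{\nu_{v',v''}}(v'') = 0$ hold. The principal obstacle is the swap-decomposition and verification that every elementary swap preserves $w_\nu$ globally; a secondary technicality is the weak-$*$ convergence together with preservation of the local-modification property in the limit.
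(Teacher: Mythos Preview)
Your overall strategy is exactly the paper's: iterate Lemma~\ref{lem:onex} over a countable dense subset of~$[v',v'']$ containing the endpoints, pass to a limit, and read off the graph support from $m\equiv 0$. The one substantive difference lies in the preservation argument. You decompose the perturbation of Lemma~\ref{lem:onex} as an integral of elementary four-point swaps and verify directly that each such swap changes $r_\nu(v_0,w_0)$ and $l_\nu(v_0,w_0)$ by the same amount at \emph{every} $(v_0,w_0)$, so that $r_\nu-l_\nu$ (hence $w_\nu$) is globally invariant and $m_\nu$ is nonincreasing. The paper instead argues more tersely that once $m_{\nu_i}(v_i)=0$ at a processed point, all later modifications are confined to the interval between the nearest already-processed points and therefore leave $w_\nu$ and $m_\nu$ untouched at~$v_i$; it then tracks $w_\nu$ only on the dense set and invokes monotonicity to identify the complete graphs. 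Your route is more explicit and yields a marginally stronger intermediate statement (pointwise preservation of~$w_\nu$ throughout $[v',v'']$ at every stage); the paper's is shorter but asks the reader to unwind why the confinement holds. Both the limit step and the support identification are handled the same way in the two proofs, and both are somewhat sketchy about the existence and properties of the limit measure, which you rightly flag as a secondary technicality.
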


\begin{proof}
  Let $\{v_i\}$ be a dense countable subset of~$[v', v'']$ including
  its endpoints. %
  Set $\nu_0 = \nu$ and define $\nu_i$ recursively to be the local
  modification of~$\nu_{i - 1}$ given by the previous lemma and such
  that $w_{\nu_i}(v_i) = w_\nu(v_i)$ and $m_{\nu_i}(v_i) = 0$. %
  Then all $\nu_i$ are either cost-reducing or equivalent to~$\nu$ and
  $w_{\nu_j}(v_i) = w_\nu(v_i)$, $m_{\nu_j}(v_i) = 0$ for all $j >
  i$. %
  Indeed, denote $w_i = w_{\nu_i}(v_i)$ and observe that if e.g.\ $v_j
  > v_i$, then, as $m_{\nu_i}(v_i) = r_{\nu_i}(v_i, w_i) = 0$, mass
  from $(-\infty, v_i]$ does not appear to the right of~$w_i$ and so
  does not contribute to the balance of mass around~$w_j$. %
  Therefore for any $j$ the possible modification of~$\nu_{j - 1}$ is
  local to the interval $(v_{i'}, v_{i''})$, where $v_{i'} =
  \max\{v_i\colon i < j, v_i < v_j\}$ and $v_{i''} = \min\{v_i\colon i
  < j, v_i > v_j\}$ (with $\max$ and~$\min$ of empty set defined as
  $v'$ and~$v''$). %
  Thus there is a well-defined limit normal transport
  plan~$\nu_\infty$ that is either a cost-reducing local modification
  or equivalent to~$\nu$ and is such that, by continuity of the
  functions $r_{\nu_\infty}$ and~$l_{\nu_\infty}$ in the first
  argument, $m_{\nu_\infty}(v)$ vanishes everywhere on~$[v', v'']$. %
  
  Consider now the function $w_{\nu_\infty}(\cdot)$, which coincides
  with~$w_\nu(\cdot)$ on a dense subset of~$[v', v'']$, so that their
  complete graphs coincide. %
  For any quadrant of the form $(-\infty, v_0)\times(w_0, \infty)$
  such that $w_0 > w_{\nu_\infty}(v_0)$, monotonicity
  of~$r_{\nu_\infty}$ in the second argument implies that
  \begin{displaymath}
    0\le \nu_\infty((-\infty, v_0)\times (w_0, \infty)) =
    r_{\nu_\infty}(v_0, w_0) \le r_{\nu_\infty}(v_0,
    w_{\nu_\infty}(v_0)) = 0,
  \end{displaymath}
  i.e., $\nu_\infty((-\infty, v_0)\times
  (w_0, \infty)) = 0$. %
  Similarly $\nu_\infty((v_0, \infty)\times(-\infty, w_0)) = 0$ for
  any quadrant with $w_0 < w_{\nu_\infty}(v_0)$. %
  The union of all such quadrants is the complement of the complete
  graph of the function $v\mapsto w_\nu(v)$; this implies that
  $\nu_\infty$ is supported thereon.
\end{proof}

\begin{corollary}
  For any normal transport plan $\nu$ there exists a real number
  $\theta_\nu$ such that $w_\nu(v) = v + \theta_\nu$.
\end{corollary}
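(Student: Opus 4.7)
The plan is to invoke Lemma~\ref{lem:monotone} for an arbitrary pair $v' < v''$ and then read off the conclusion from the uniformity of both marginals of the resulting normal plan $\nu_{v',v''}$. No geometric analysis beyond the lemma itself will be needed, since the lemma already puts the support of $\nu_{v',v''}$ on the complete graph of $w_\nu$ in the strip $v' \le v \le v''$.

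The first step is to unpack the endpoint conditions $m_{\nu_{v',v''}}(v') = m_{\nu_{v',v''}}(v'') = 0$ using the definitions~\eqref{eq:12}. They assert that under $\nu_{v',v''}$ no mass originating in $(-\infty, v']$ can land to the right of $w_\nu(v')$, no mass originating in $(v'', +\infty)$ can land to the left of $w_\nu(v'')$, and analogously in the other direction. Hence the restriction of $\nu_{v',v''}$ to the vertical strip $(v', v''] \times \rset$ is in fact concentrated inside the rectangle $(v', v''] \times (w_\nu(v'), w_\nu(v'')]$, and likewise the restriction to the horizontal strip $\rset \times (w_\nu(v'), w_\nu(v'')]$ is concentrated inside the same rectangle.

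The second step is a one-line mass count. Because $\nu_{v',v''}$ has Lebesgue marginals, the total mass in the vertical strip is $v'' - v'$ and the total mass in the horizontal strip is $w_\nu(v'') - w_\nu(v')$. Both equal the total mass of the common rectangle, so
\begin{displaymath}
  w_\nu(v'') - w_\nu(v') = v'' - v'.
\end{displaymath}
Since $v' < v''$ were arbitrary, $v \mapsto w_\nu(v) - v$ is a real constant, which I would denote by $\theta_\nu$.

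I do not foresee a genuine obstacle: all the substantive content (existence of a monotone modification and the support on the graph of $w_\nu$) has been absorbed into Lemma~\ref{lem:monotone}, and the present corollary is just the conversion of that geometric statement into the affine identity $w_\nu(v) = v + \theta_\nu$. As a byproduct this single-valued formula eliminates the possibility, flagged after Lemma~\ref{lem:m}, that $r_\nu(v,\cdot) = l_\nu(v,\cdot)$ on a nondegenerate segment, so the convention adopted there is in fact vacuous.
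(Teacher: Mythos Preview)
Your proof is correct and follows essentially the same route as the paper's: invoke Lemma~\ref{lem:monotone}, then compute the mass of the rectangle $(v',v'']\times(w_\nu(v'),w_\nu(v'')]$ two ways via the uniform marginals to get $w_\nu(v'')-w_\nu(v')=v''-v'$. The paper compresses your Steps~1--3 into a single displayed equality, but the content is identical, and your closing remark about the convention in Lemma~\ref{lem:m} matches the paper's own forward reference.
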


\begin{proof}
  It is enough to show that $w_\nu(v') - v' = w_\nu(v'') - v''$ for
  all $v'$,~$v''$. %
  Let $v' < v''$ and $\nu_{v', v''}$ be the local modification
  constructed in the previous lemma. %
  Since it has uniform marginals and monotone support, we have
  $w_\nu(v'') - w_\nu(v') = \nu_{v', v''}((v', v'')\times(w_\nu(v'),
  w_\nu(v''))) = v'' - v'$, which completes the proof.
\end{proof}

We call the parameter $\theta_\nu$ the \emph{rotation number} of the
normal transport plan~$\nu$. %

\begin{definition}
  A normal transport plan consisting of a uniform measure supported on
  the line $\{(v, w)\colon w = v + \theta\}$ is called a \emph{shift}
  and denoted by~$\nu_\theta$. %
\end{definition}

\begin{lemma}
  \label{lem:reverserotation}
  For any~$\theta$ the shift $\nu_\theta$ is $c_{[F_0, F_1]}$-locally
  optimal.
\end{lemma}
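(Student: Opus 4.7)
The plan is to reduce everything to the monotonization machinery developed in Lemmas~\ref{lem:onex} and~\ref{lem:monotone}. Given any local modification~$\nu'$ of~$\nu_\theta$, one must show that
\begin{displaymath}
  \iint c_{[F_0, F_1]}(v, w)\, \bigl(\nu'(\diff v\times\diff w) -
  \nu_\theta(\diff v\times\diff w)\bigr) \ge 0.
\end{displaymath}

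First, I would pick a compact interval $[v', v'']$ whose interior contains the projection to $V_0$ of $\mathop{\mathrm{supp}}(\nu' - \nu_\theta)$, and apply the construction of Lemma~\ref{lem:monotone} to~$\nu'$ on~$[v', v'']$. This produces $\nu'_\infty$, a local modification of~$\nu'$ that is either cost-reducing or equivalent to~$\nu'$, and whose support in the strip $[v', v'']\times V_1$ coincides with the complete graph of the monotone function $v\mapsto w_{\nu'}(v)$. Because equivalent plans share a conjugate and $c_{[F_0, F_1]}$ is constant over the ranges of $F_0^{-1}, F_1^{-1}$ corresponding to atoms, in either alternative one gets $\iint c_{[F_0, F_1]}\, d(\nu'_\infty - \nu') \le 0$.

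Next, I would identify~$\nu'_\infty$ with~$\nu_\theta$. By the corollary to Lemma~\ref{lem:monotone}, the normal plan~$\nu'$ has a rotation number $\theta_{\nu'}\in\rset$ with $w_{\nu'}(v) = v + \theta_{\nu'}$ for every~$v$. Outside the compact support of $\nu' - \nu_\theta$ the plan~$\nu'$ agrees with~$\nu_\theta$, so for all sufficiently large~$|v|$ we have $w_{\nu'}(v) = v + \theta$, forcing $\theta_{\nu'} = \theta$. Hence $\nu'_\infty$ is supported on the line $w = v + \theta$ inside the strip, and coincides with~$\nu_\theta$ outside it; uniformity of both marginals then forces $\nu'_\infty = \nu_\theta$ globally.

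Combining, $\iint c_{[F_0, F_1]}\, d(\nu_\theta - \nu') \le 0$, so $\nu'$ cannot be a cost-reducing local modification. The direct statement that $\nu_\theta$ is $c_{[F_0, F_1]}$-locally optimal follows at once.

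The main subtlety, which would require care in a full write-up, is to verify that applying Lemma~\ref{lem:monotone} to~$\nu'$ yields a genuinely local (i.e., compactly supported) modification. This is not an issue because for points~$v$ outside $[v', v'']$ we already have $m_{\nu'}(v) = m_{\nu_\theta}(v) = 0$, so the intervals $(v^-, v^+)$, $(w^-, w^+)$ entering Lemma~\ref{lem:onex} reduce to single points there and no modification is performed; thus the cumulative support of the modifications is contained in a bounded neighborhood of $[v', v'']\times[v' + \theta, v'' + \theta]$.
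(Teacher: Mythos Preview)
Your proof is correct and follows essentially the same approach as the paper: take an arbitrary local modification of~$\nu_\theta$, apply Lemma~\ref{lem:monotone} to monotonize it over a suitable strip, and then identify the resulting plan with~$\nu_\theta$, concluding that the modification could not have been cost-reducing. Your write-up is simply more explicit than the paper's, in that you spell out the use of the corollary on rotation numbers to force $\theta_{\nu'}=\theta$ and you address the locality of the monotonization, whereas the paper compresses all of this into the single clause ``it coincides with~$\nu_\theta$ over $v'<v<v''$, and hence everywhere.''
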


\begin{proof}
  Let $\bar\nu$ be a local modification of~$\nu_\theta$ such that the
  signed measure~$\nu_\theta - \bar\nu$ is supported in $(v',
  v'')\times(w', w'')$. %
  Let $\bar\nu_{v', v''}$ be a local modification of~$\bar\nu$
  constructed in Lemma~\ref{lem:monotone}; it coincides
  with~$\nu_\theta$ over $v' < v < v''$, and hence everywhere. %
  Since it is either cost-reducing or equivalent to~$\bar\nu$, it
  follows that $\bar\nu$ cannot be cost-reducing with respect
  to~$\nu_\theta$, i.e., that $\nu_\theta$ is a cost minimizer with
  respect to local modifications.
\end{proof}

\begin{lemma}
  \label{lem:rotation}
  Any $c_{[F_0, F_1]}$-locally optimal normal transport $\nu$ with
  rotation number~$\theta = \theta_\nu$ is equivalent to the shift
  $\nu_\theta$.
\end{lemma}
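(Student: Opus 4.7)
The plan is to use Lemma~\ref{lem:monotone} to replace $\nu$ on an arbitrary bounded vertical strip with a plan that coincides on that strip with the shift $\nu_\theta$, and then to translate the resulting agreement of conjugates into equivalence of $\nu$ with $\nu_\theta$ itself.

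By the corollary to Lemma~\ref{lem:monotone}, the function $v \mapsto w_\nu(v)$ is exactly $v + \theta$ with $\theta = \theta_\nu$. For any $v' < v''$, Lemma~\ref{lem:monotone} produces a local modification $\nu_{v',v''}$ whose support inside the strip $\{v' \le v \le v''\}$ is the complete graph of $w_\nu$; since $w_\nu$ is continuous here, that complete graph is simply the line segment $\{(v, v + \theta) : v \in [v', v'']\}$. Combined with the fact that $\nu_{v',v''}$, being a normal transport plan, has Lebesgue first marginal, this forces $\nu_{v',v''}$ to coincide on the strip with $\nu_\theta$. Because $\nu$ is $c_{[F_0, F_1]}$-locally optimal, $\nu_{v',v''}$ cannot be cost-reducing, and the concluding step of the proof of Lemma~\ref{lem:monotone} then guarantees that $\nu_{v',v''}$ is equivalent to $\nu$, i.e.\ $\nu_{v',v''}^{[F_0, F_1]} = \nu^{[F_0, F_1]}$.

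It remains to check that $\nu^{[F_0, F_1]} = (\nu_\theta)^{[F_0, F_1]}$; by Definition~\ref{def:conjugate} it suffices to verify this on bounded Borel rectangles $A \times B \subset U_0 \times U_1$. Given such $A$, choose $v' < v''$ with $F_0(A) \subset [v', v'']$. Then, unwinding~\eqref{eq:10},
\[
  \nu^{[F_0, F_1]}(A \times B)
  = \nu_{v',v''}^{[F_0, F_1]}(A \times B)
  = \nu_{v',v''}(F_0(A) \times F_1(B)),
\]
and since $F_0(A)$ lies inside the strip where $\nu_{v',v''}$ agrees with $\nu_\theta$, the right-hand side equals $\nu_\theta(F_0(A) \times F_1(B)) = (\nu_\theta)^{[F_0, F_1]}(A \times B)$. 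Hence $\nu$ is equivalent to $\nu_\theta$.

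The main delicacy is the passage from the support statement provided by Lemma~\ref{lem:monotone} (a line segment inside the strip) to the measure-theoretic statement that $\nu_{v',v''}$ literally equals $\nu_\theta$ on that strip; this rests on the fact that a positive measure concentrated on the graph of a function and with prescribed push-forward along the first coordinate is uniquely determined, together with the preservation of the uniform first marginal throughout the construction in Lemma~\ref{lem:monotone}. No limit in $v',v''$ is needed, since a single bounded $[v',v'']$ containing $F_0(A)$ handles each bounded rectangle $A\times B$.
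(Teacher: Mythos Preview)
Your proof is correct and follows essentially the same approach as the paper's: both invoke Lemma~\ref{lem:monotone} to obtain a local modification of $\nu$ that is equivalent to $\nu$ (using local optimality to rule out the cost-reducing alternative) and coincides with $\nu_\theta$ on a strip, and then conclude that the conjugates agree. The only cosmetic difference is that the paper uses the expanding sequence of intervals $[-i,i]$ and a stabilization argument, whereas you handle each bounded rectangle $A\times B$ with a single suitably chosen strip, which---as you note---avoids taking any limit.
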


\begin{proof}
  Let $v''_i = -v'_i = i$ for $i = 1, 2, \dots$\, . %
  All local modifications $\nu_i = \nu_{v'_i, v''_i}$ of~$\nu$
  constructed as in Lemma~\ref{lem:monotone} cannot be cost-reducing
  and are therefore equivalent to~$\nu$. %
  On the other hand, this sequence stabilizes to the
  shift~$\nu_\theta$ on any bounded subset of $V_0\times V_1$ as soon
  as this set is covered by $(-i, i)\times(-i, i)$. %
  Therefore $\nu_\theta$ has the same conjugate as all~$\nu_i$ and is
  equivalent to~$\nu$.
\end{proof}

Lemmas~\ref{lem:reverserotation}, \ref{lem:rotation},
and~\ref{lem:optimal} together imply Theorem~\ref{thm:shifts}.

\section{Transport optimization for periodic measures}
\label{sec:average-cost}

Let now $c$ be a cost function that satisfies the Monge
condition~\eqref{eq:4}, the integer shift invariance
condition~\eqref{eq:5}, the growth condition~\eqref{eq:6}, and is
bounded from below. %
Suppose that $\gamma_\theta$ is a locally optimal transport plan
on~$U_0\times U_1$ with marginals $\mu_0$,~$\mu_1$ conjugate to the
shift~$\nu_\theta$. %
Define $c_{[F_0, F_1]}$ as in~\eqref{eq:11} and let $F_1^\theta(u) =
F_1(u) - \theta$ as illustrated in fig.~\ref{fig:graphs}.
\begin{definition}
  \label{def:cost}
  We call the quantity
  \begin{equation}
    \label{eq:14}
    C_{[F_0, F_1]}(\theta) = \int_0^1 c_{[F_0, F_1]}(v', v' + \theta)\,\diff v'
    = \int_0^1 c\bigl(F_0^{-1}(v'), (F_1^\theta)^{-1}(v')\bigr)\, \diff v'
  \end{equation}
  the \emph{average cost} (per period) of the transport
  plan~$\gamma_\theta$. %
\end{definition}
Observe that it is indifferent whether to integrate here from $0$
to~$1$ or from $v$ to~$v + 1$ for any real~$v$. %
Examples of average cost functions $C_{[F_0, F_1]}$ for different
marginals $\mu_0$,~$\mu_1$ and different cost functions $c$ are shown
in fig.~\ref{fig:cost_function}. %

\begin{figure}[htp]
  \centerline{\subfigure[Black bars: $\mu'_0$, light bars: $\mu'_1$]%
    {\includegraphics[width=6cm]{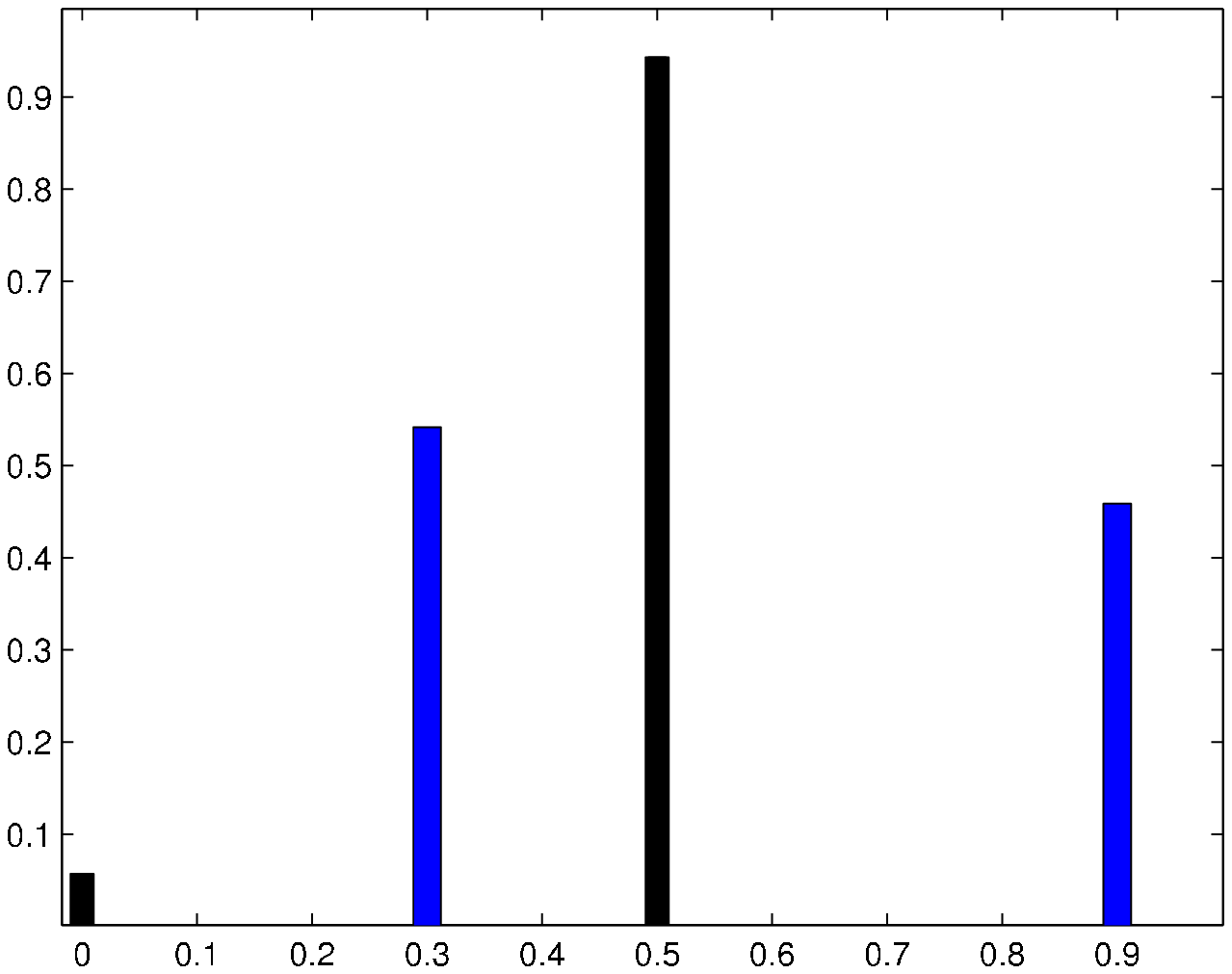}}\hspace{0.5cm} %
    \subfigure[$C_{[F'_o, F'_q]}(\cdot)$ for $c(x, y) = |x - y|$]%
    {\includegraphics[width=6cm]{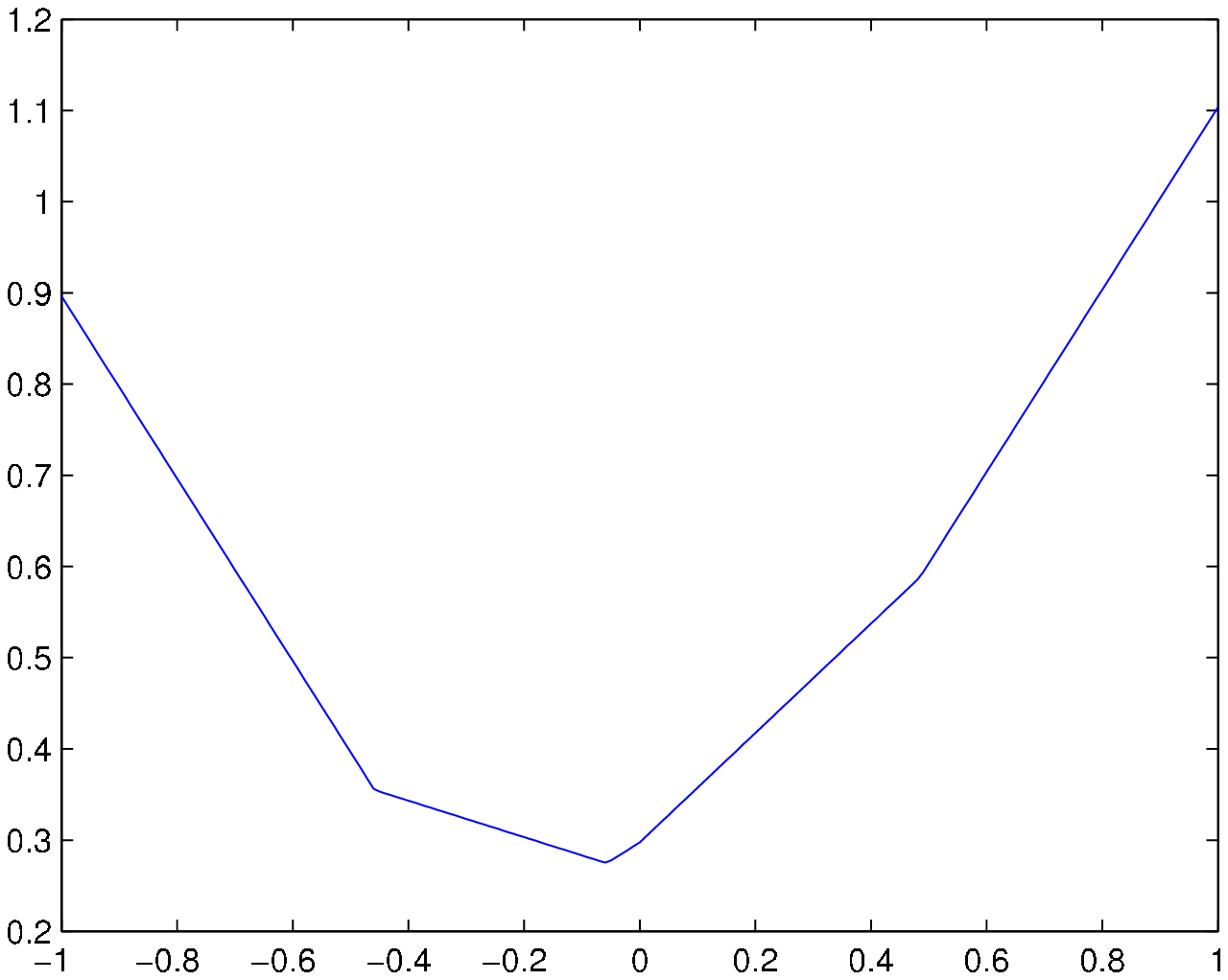}}}

  \centerline{\subfigure[Black bars: $\mu''_0$, light bars: $\mu''_1$]%
    {\includegraphics[width=6cm]{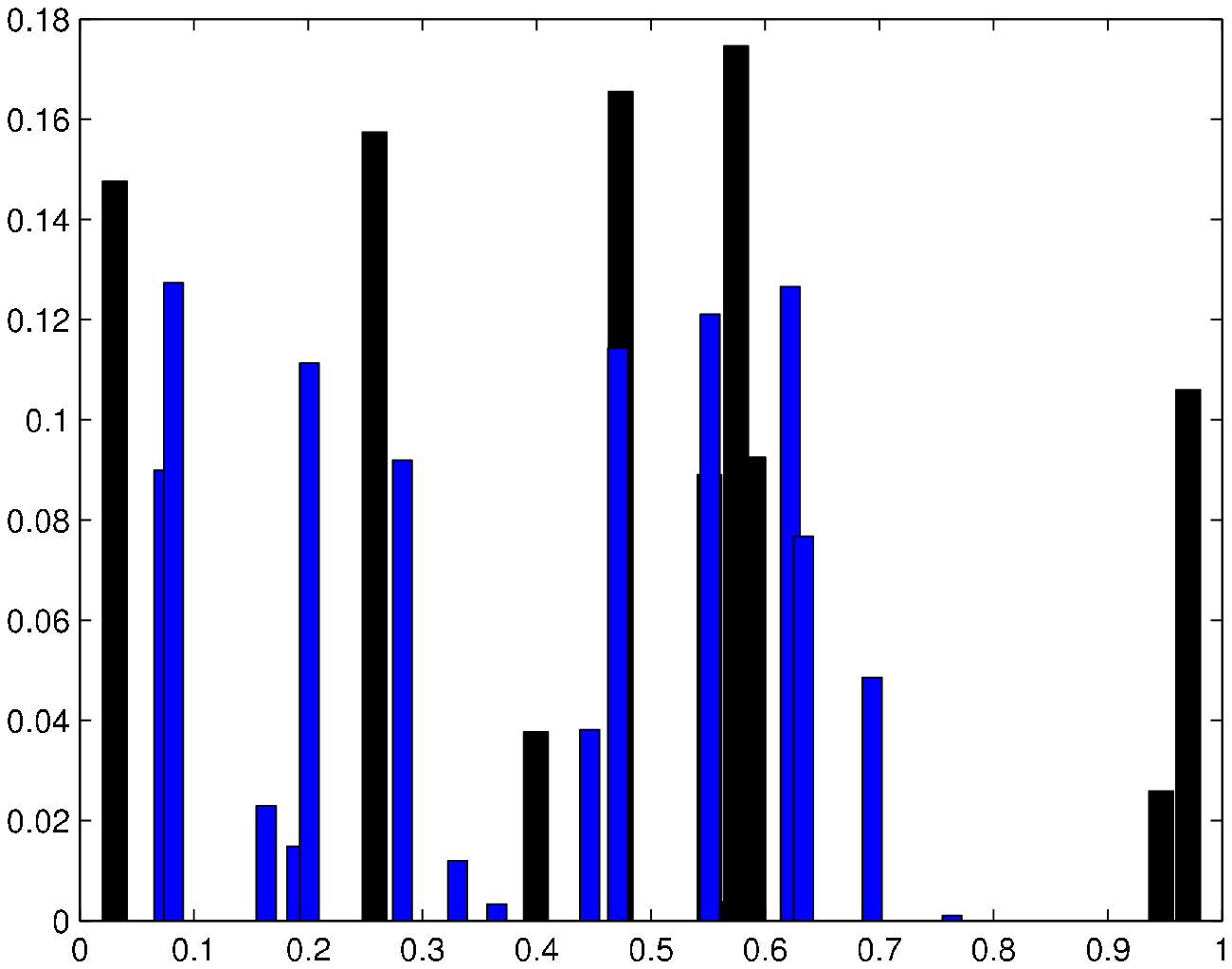}}\hspace{0.5cm} %
    \subfigure[$C_{[F''_0, F''_1]}(\cdot)$ for $c(x,y)= |x-y|^2$]%
    {\includegraphics[width=6cm]{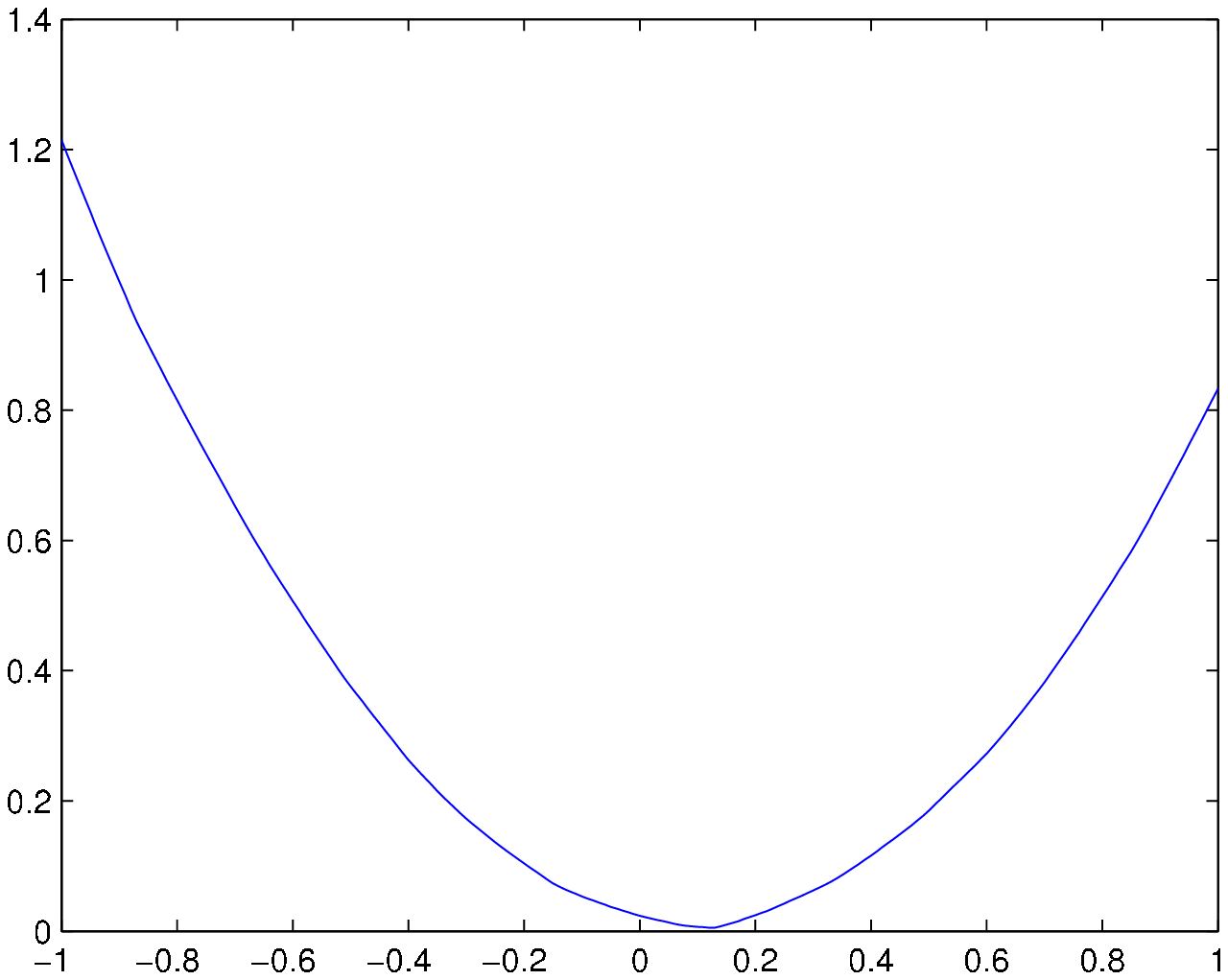}}}

  \centerline{\subfigure[$C_{[F''_0, F''_1]}(\cdot)$ for a
    non-symmetric cost $c(x,y)= |0.5 + x - y|^{1.2} + 0.1\, \cos(2\pi
    x + 1) - 0.3\, \sin(2\pi y -
    0.5)$]{\includegraphics[width=6cm]{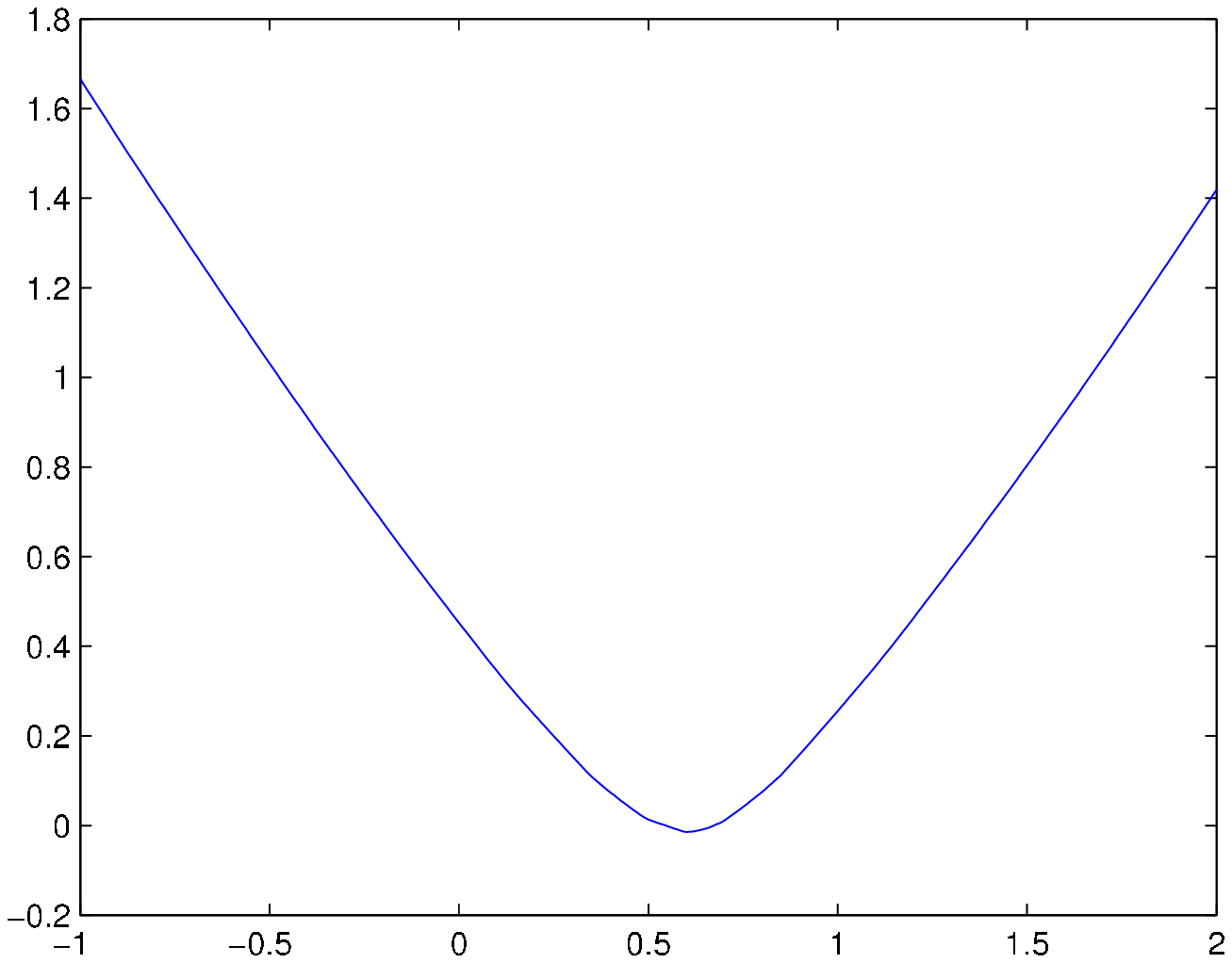}}
 }
  \caption{Average cost functions $C_{[F_0, F_1]}$ for some atomic
    marginals and costs. %
    The cost $|x - y|$ is regarded as $\lim|x - y|^\lambda$, $\lambda
    > 1$, as $\lambda \to 1$.}
\label{fig:cost_function}
\end{figure}

The following technical lemma provides a ``bracket'' for the global
minimum of~$C_{[F_0, F_1]}$ and estimates of its derivatives
independent of $\mu_0$,~$\mu_1$.

\begin{lemma}
  \label{the:convex}
  The average cost $C_{[F_0, F_1]}$ is a convex function that
  satisfies the inequalities
  \begin{equation}
    \label{eq:15}
    \inf_{x, y} c(x, y) \le
    \underline C(\theta) \le C_{[F_0, F_1]}(\theta) \le  \overline C(\theta)
  \end{equation}
  with
  \begin{equation}
    \label{eq:16}
    \underline C(\theta)
    = \inf_{\substack{-1 \le u_1\le 2\\\theta - 1\le u_2 \le \theta + 2}}
    c(u_1, u_2),\quad
    \overline C(\theta)
    = \sup_{\substack{-1 \le u_1\le 2\\\theta - 1\le u_2 \le \theta + 2}}
    c(u_1, u_2).
  \end{equation}
  There exist constants $\underline\Theta < \overline\Theta$ and
  $\underline L, \overline L > 0$ such that the global minimum of $C$
  is achieved on the interval $[\underline\Theta, \overline\Theta]$
  and
  \begin{equation}
    \label{eq:17}
    -\underline L \le C_{[F_0, F_1]}'(\underline\Theta - 0) \le 0
    \le C_{[F_0, F_1]}'(\overline\Theta + 0) \le \overline L,
  \end{equation}
  where $C_{[F_0, F_1]}'(\cdot)$ is the derivative of~$C_{[F_0,
    F_1]}$. %
  These constants are independent on $\mu_0$,~$\mu_1$ and are given
  explicitly by formulas~\eqref{eq:18}, \eqref{eq:19} and~\eqref{eq:20}
  below. %
\end{lemma}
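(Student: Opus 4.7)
The plan is to handle the four claims in sequence: convexity first (the main hurdle), then the pointwise bounds~\eqref{eq:15}--\eqref{eq:16}, and finally coercivity together with the derivative bounds~\eqref{eq:17}.

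Convexity cannot be proved by a direct pointwise argument, because the Monge condition does not force the integrand $c\bigl(F_0^{-1}(v), F_1^{-1}(v + \theta)\bigr)$ to be convex in $\theta$ for fixed $v$. I would instead exploit the shift structure of \S\ref{sec:conjugate}. Given $\theta_1 < \theta_2$, $s \in [0, 1]$, and $\theta = (1-s)\theta_1 + s\theta_2$, form the normal transport plan $\nu^\ast = (1-s)\nu_{\theta_1} + s\nu_{\theta_2}$. A direct computation of $r_{\nu^\ast}$ and $l_{\nu^\ast}$ from~\eqref{eq:12}, using that each $\nu_{\theta_i}$ is the shift $w = v + \theta_i$, shows that the equation $r_{\nu^\ast}(v, v + \xi) = l_{\nu^\ast}(v, v + \xi)$ is solved uniquely by $\xi = \theta$, so the rotation number of $\nu^\ast$ equals $\theta$. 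Applying Lemma~\ref{lem:monotone} to $\nu^\ast$ on the strip $-N \le v \le N$ yields a local modification supported on the line $w = v + \theta$ throughout the strip and agreeing with $\nu^\ast$ outside a transition region whose width depends only on $\theta_2 - \theta_1$. The cost of this modification over the strip equals $2N\, C_{[F_0, F_1]}(\theta)$, while that of $\nu^\ast$ over the strip equals $2N\bigl[(1-s)\,C_{[F_0, F_1]}(\theta_1) + s\,C_{[F_0, F_1]}(\theta_2)\bigr]$; local boundedness of $c$ ensures the transition regions contribute only $O(1)$, independently of $N$. Because the construction of Lemma~\ref{lem:monotone} (iterated from Lemma~\ref{lem:onex}) is cost-reducing or equivalent, dividing by $2N$ and letting $N \to \infty$ yields $C_{[F_0, F_1]}(\theta) \le (1-s)\,C_{[F_0, F_1]}(\theta_1) + s\,C_{[F_0, F_1]}(\theta_2)$. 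The main technical point is precisely the $O(1)$ boundary estimate.

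The pointwise bounds \eqref{eq:15}--\eqref{eq:16} reduce to range estimates on $F_0^{-1}$ and $F_1^{-1}$. From $F_0(0) = 0$, $F_0(1) = 1$, right-continuity, and the identity $F_0^{-1}(v + 1) = F_0^{-1}(v) + 1$ implied by~\eqref{eq:8}, one deduces $F_0^{-1}(0) \in [0, 1]$, so $F_0^{-1}([0, 1]) \subset [-1, 2]$; analogously $F_1^{-1}([\theta, \theta + 1]) \subset [\theta - 1, \theta + 2]$. Taking the infimum and supremum of $c$ over the resulting rectangle of $(u_1, u_2)$ values pointwise bounds the integrand of \eqref{eq:14} from below by $\underline C(\theta)$ and from above by $\overline C(\theta)$, giving~\eqref{eq:15}.

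For the bracket and derivative bounds, the growth condition~\eqref{eq:6} together with the preceding range estimates forces $|F_1^{-1}(v + \theta) - F_0^{-1}(v)| \to \infty$ uniformly in $v \in [0, 1]$ as $|\theta| \to \infty$, so $\underline C(\theta) \to \infty$. Defining $\underline\Theta$ and $\overline\Theta$ as the smallest and largest values of $\theta$ for which $\underline C(\theta)$ does not exceed some fixed reference value (for example $\overline C(0)$) then confines the global minimum of the convex, lower semicontinuous function $C_{[F_0, F_1]}$ to $[\underline\Theta, \overline\Theta]$. By convexity $C_{[F_0, F_1]}'$ is nondecreasing and changes sign inside the bracket, giving the inner inequalities of~\eqref{eq:17}. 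The outer bounds follow from finite-difference estimates controlled by~\eqref{eq:15}--\eqref{eq:16}; for instance,
\begin{displaymath}
  C_{[F_0, F_1]}'(\underline\Theta - 0)
  \ge C_{[F_0, F_1]}(\underline\Theta) - C_{[F_0, F_1]}(\underline\Theta - 1)
  \ge \underline C(\underline\Theta) - \overline C(\underline\Theta - 1),
\end{displaymath}
which gives a universal constant $-\underline L$ depending only on~$c$, and symmetrically for $\overline L$.
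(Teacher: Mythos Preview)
Your proposal is correct in all parts. For the bounds~\eqref{eq:15}--\eqref{eq:16}, the bracket $[\underline\Theta,\overline\Theta]$, and the derivative estimates~\eqref{eq:17} it essentially matches the paper's argument; the paper defines $\overline L$ as an infimum of difference quotients $(\overline C(\theta)-\underline C(\overline\Theta))/(\theta-\overline\Theta)$ over all $\theta\ge\overline\Theta$ rather than fixing a unit step, and uses $\min_{\theta'}\overline C(\theta')$ rather than $\overline C(0)$ as the reference level for the bracket, but these are cosmetic choices.

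The convexity proof, however, is genuinely different. The paper gives a short direct computation: for $\theta=\tfrac12(\theta'+\theta'')$, one of the two copies of $C_{[F_0,F_1]}(\theta)$ is written as an integral over $[\theta-\theta',\,\theta-\theta'+1]$ (legitimate by periodicity of $c_{[F_0,F_1]}$) and the substitution $v'=v+(\theta-\theta')$ is made in it and in $C_{[F_0,F_1]}(\theta')$; since $2\theta-\theta'=\theta''$, the combination $2C_{[F_0,F_1]}(\theta)-C_{[F_0,F_1]}(\theta')-C_{[F_0,F_1]}(\theta'')$ collapses to a single integral over $[0,1]$ of a four-term Monge expression in $c_{[F_0,F_1]}$, which is nonpositive by~\eqref{eq:4}. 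This yields midpoint convexity, and finiteness of $C_{[F_0,F_1]}$ upgrades it to convexity. Your route via the mixture $\nu^\ast$, its rotation number, the monotone rearrangement of Lemma~\ref{lem:monotone}, and the $N\to\infty$ averaging is correct and has the conceptual merit of exhibiting convexity of $C_{[F_0,F_1]}$ as a direct consequence of the local optimality of shifts; it also delivers convexity for arbitrary $s$ without a separate midpoint-to-full step. But it is substantially heavier: it imports all of \S\ref{sec:conjugate}, and the $O(1)$ boundary estimate, while indeed routine for the specific plan $\nu^\ast$ (the transition region has width $\theta_2-\theta_1$ independent of~$N$), still needs to be spelled out. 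The paper's argument is self-contained and uses nothing beyond the Monge condition and periodicity.
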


The bounds given in the present lemma are rather loose. %
E.g., for $c(x, y) = |x - y|^\alpha$ with $\alpha > 1$, they are
$\underline C(\theta) = \mathop{\mathrm{conv}} \min(|\theta +
3|^\alpha, |\theta - 3|^\alpha)$, $\overline C(\theta) = \max(|\theta
+ 3|^\alpha, |\theta - 3|^\alpha)$, and $-\underline\Theta =
\overline\Theta = 6$. %
For symmetric costs like this one it is often possible to replace
$[\underline\Theta, \overline\Theta]$ by the interval $[-1, 1]$ which
may be tighter. %

\begin{proof}
  To prove convexity of~$C_{[F_0, F_1]}$ it is sufficient to show that
  $C_{[F_0, F_1]}\bigl(\frac 12(\theta' + \theta'')\bigr) \le \frac 12
  \bigl(C_{[F_0, F_1]}(\theta') + C_{[F_0, F_1]}(\theta'')\big)$ for
  all $\theta'$,~$\theta''$. %
  Let $\theta' < \theta''$, denote $\theta = \frac 12(\theta' +
  \theta'')$ and write
  \begin{displaymath}
    \begin{gathered}
      C_{[F_0, F_1]}(\theta) = \int_0^1 c_{[F_0, F_1]}(v, v + \theta)\,\diff v = \int_{\theta
        - \theta'}^{\theta - \theta' + 1}\!\!\!\!\!\!\!\!\!\! c_{[F_0, F_1]}(v', v' + \theta)\,
      \diff v', \\
      C_{[F_0, F_1]}(\theta') = \int_{\theta - \theta'}^{\theta - \theta' + 1}\!\!\!\!\!\!\!\!\!\!
      c_{[F_0, F_1]}(v', v' + \theta')\, \diff v',\quad C_{[F_0, F_1]}(\theta'') = \int_0^1
      c_{[F_0, F_1]}(v, v + \theta'')\, \diff v.
    \end{gathered}
  \end{displaymath}
  Making the change of variables $v' = v + \theta - \theta'$ and
  taking into account that $\theta - \theta' + \theta = 2\theta -
  \theta' = \theta''$, we get
  \begin{multline*}
    2C_{[F_0, F_1]}(\theta) - C_{[F_0, F_1]}(\theta') - C_{[F_0,
      F_1]}(\theta'') \\
    = \int_0^1 \bigl(c_{[F_0, F_1]}(v, v + \theta) + c_{[F_0, F_1]}(v
    + \theta - \theta', v
    + \theta'') \\
    - c_{[F_0, F_1]}(v + \theta - \theta', v + \theta) - c_{[F_0,
      F_1]}(v, v + \theta'')\bigr)\,\diff v.
  \end{multline*}
  Since $v + \theta - \theta' > v$ and $v + \theta'' > v + \theta$,
  the Monge condition for~$c$ implies that the integrand here is
  negative on a set of nonzero measure, yielding the desired
  inequality for the function~$C_{[F_0, F_1]}$. %
  Note that convexity of~$C_{[F_0, F_1]}$ implies its continuity
  because $C_{[F_0, F_1]}$ is finite everywhere. %

  Bounds~\eqref{eq:15} on~$C_{[F_0, F_1]}(\theta)$ follow from
  \eqref{eq:14} with $v = 0$ because $v' - 1 \le F_0^{-1}(v') \le v' +
  1$, $v' + \theta - 1 \le (F_1^\theta)^{-1}(v') \le v' + \theta + 1$,
  and $0 \le v' \le 1$. %
  Furthermore, the growth condition~\eqref{eq:6} implies that
  $\underline C(\theta) \ge P$ as soon as $|\theta| > R(P) + 3$. %
  Indeed, in this case $|u_2 - u_1| \ge |\theta| - 3 \ge R(P)$ and
  right-hand sides of formulas~\eqref{eq:16} are bounded by~$P$ from
  below. %
  Therefore one can set
  \begin{equation}
    \label{eq:18}
    \underline\Theta = \inf\{\theta\colon \underline C(\theta) =
    \min_{\theta'} \overline C(\theta')\} > -\infty,\
    \overline\Theta = \sup\{\theta\colon \underline C(\theta) =
    \min_{\theta'} \overline C(\theta')\} < \infty,
  \end{equation}
  where $\min$ is attained because $C$ is continuous. %
  
  The set $\arg\min_{\theta'} C_{[F_0, F_1]}(\theta')$ lies on the
  segment $[\underline\Theta, \overline\Theta]$. %
  (Indeed, if e.g. $\theta < \underline\Theta$, then $C_{[F_0,
    F_1]}(\theta) \ge \underline C(\theta) > \min_{\theta'} \overline
  C(\theta') \ge \min_{\theta'} C_{[F_0, F_1]}(\theta')$, so $\theta$
  cannot belong to~$\arg\min_{\theta'} C_{[F_0, F_1]}(\theta')$; a
  similar conclusion holds if~$\theta > \overline\Theta$.) %
  It follows that $C_{[F_0, F_1]}'(\underline\Theta - 0) \le 0 \le
  C_{[F_0, F_1]}'(\overline\Theta + 0)$.

  By convexity $C_{[F_0, F_1]}'(\overline\Theta + 0) \le (C_{[F_0, F_1]}(\theta) -
  C_{[F_0, F_1]}(\overline\Theta))/(\theta - \overline\Theta)$ for all
  $\theta\ge\overline\Theta$. %
  The right-hand side of the latter inequality can be estimated from
  above by
  \begin{equation}
    \label{eq:19}
    \overline L = \inf_{\theta\ge\overline\Theta}
    \frac{\overline C(\theta) - \underline C(\overline\Theta)}
    {\theta - \overline\Theta}.
  \end{equation}
  The ratio in the right-hand side takes finite values, so $\overline
  L$ is finite. %
  This establishes the inequality $C_{[F_0, F_1]}'(\overline\Theta +
  0) \le \overline L$. %
  The rest of~\eqref{eq:17} is given by a symmetrical argument; in
  particular
  \begin{equation}
    \label{eq:20}
    \underline L = \inf_{\theta \le \underline\Theta}
    \frac{\overline C(\theta) - \underline C(\underline\Theta)}
    {\underline\Theta - \theta}.
  \end{equation}
\end{proof}

\begin{definition}
  A locally optimal transport plan~$\gamma_{\theta_0}$ is called
  \emph{globally optimal} if $\theta_0 \in \arg\min_\theta C_{[F_0, F_1]}(\theta)$.
\end{definition}

We can now reduce minimization of~\eqref{eq:1} on the unit circle to
minimization of~\eqref{eq:2} on~$\rset$, which involves the cost
function $c$ rather than~$\hat c$:

\begin{theorem}
  The canonical projection $\pi\colon \rset\to\tset$ establishes a
  bijection between globally optimal transport plans on
  $\rset\times\rset$ and transport plans on $\tset\times\tset$ that
  minimize~\eqref{eq:1}.
\end{theorem}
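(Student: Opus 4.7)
The plan is to prove that $\min_\theta C_{[F_0,F_1]}(\theta)=\inf_{\hat\gamma}\hat I(\hat\gamma)$ and then check that the projection/lift operations realize a bijection between the minimizers on each side. Starting with the projection direction, I would take a globally optimal $\gamma_{\theta_0}$, restrict it to the strip $[0,1)\times\rset$ (whose first marginal equals $\mu_0$ restricted to $[0,1)$), and push this restriction forward by $\pi\times\pi$ to obtain a measure $\hat\gamma_{\theta_0}$ on $\tset\times\tset$. The marginals of $\hat\gamma_{\theta_0}$ are $\hat\mu_0$ and $\hat\mu_1$ (the second one by unfolding the sum over integer translates using diagonal periodicity of $\gamma_{\theta_0}$). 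Because $\hat c(\pi x,\pi y)=\inf_{k\in\zset} c(x,y+k)\le c(x,y)$, we obtain at once
\begin{displaymath}
\hat I(\hat\gamma_{\theta_0})
= \iint_{[0,1)\times\rset} \hat c(\pi x,\pi y)\,\gamma_{\theta_0}(\diff x\times\diff y)
\le \iint_{[0,1)\times\rset} c(x,y)\,\gamma_{\theta_0}(\diff x\times\diff y)
= C_{[F_0,F_1]}(\theta_0).
\end{displaymath}

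For the converse (lift) direction, given any $\hat\gamma$ on $\tset\times\tset$ I would choose a measurable selection $k(\hat x,\hat y)\in\arg\min_{k\in\zset} c(\bar x,\bar y+k)$ (the set being nonempty and finite by the coercivity condition~\eqref{eq:6} and lower semicontinuity of $c$), define $\gamma_0$ on $[0,1)\times\rset$ as the pushforward of $\hat\gamma$ under $(\hat x,\hat y)\mapsto(\bar x,\bar y+k(\hat x,\hat y))$, and periodize $\gamma_0$ along the diagonal shifts $(u,v)\mapsto(u+n,v+n)$ for $n\in\zset$. The resulting locally finite transport plan $\gamma$ on $\rset\times\rset$ has the periodic marginals $\mu_0,\mu_1$ and satisfies $\iint_{[0,1)\times\rset} c\,\diff\gamma = \hat I(\hat\gamma)$ by the choice of $k$. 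By Lemmas~\ref{lem:onex} and~\ref{lem:monotone} together with Theorem~\ref{thm:shifts}, $\gamma$ admits a sequence of cost-non-increasing local modifications converging to some $\gamma_\theta$, so $C_{[F_0,F_1]}(\theta)\le \hat I(\hat\gamma)$. Taking the infimum over $\hat\gamma$ and combining with the first inequality gives $\min_\theta C_{[F_0,F_1]}(\theta)=\inf_{\hat\gamma}\hat I(\hat\gamma)$, and all intermediate inequalities collapse to equalities at a matched optimum.

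For the bijection itself, surjectivity follows because the locally modified lift of any optimal $\hat\gamma^\ast$ is a globally optimal $\gamma_{\theta_0}$ whose projection necessarily returns $\hat\gamma^\ast$ (the chain of equalities forbids any loss of cost along the way). Injectivity follows from the observation that $\gamma_\theta$ and $\gamma_{\theta+n}$ for $n\in\zset$ differ only by an integer shift of the second coordinate and hence share the same projection on $\tset\times\tset$, while Lemma~\ref{the:convex} combined with~\eqref{eq:6} rules out $\arg\min C_{[F_0,F_1]}$ containing two points differing by a nonzero integer: a convex function tending to $+\infty$ at both ends cannot be constant on an interval of unit length without being constant to the right of it as well. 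The main obstacle I expect is the measurable selection of $k(\hat x,\hat y)$ and the verification that the diagonal periodization of $\gamma_0$ returns precisely $\mu_1$ as its second marginal; both are handled by standard arguments using coercivity~\eqref{eq:6} and the periodicity~\eqref{eq:8}.
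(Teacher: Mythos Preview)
Your projection step is fine, but the lift direction has a genuine gap at the sentence ``\dots admits a sequence of cost-non-increasing local modifications converging to some $\gamma_\theta$, so $C_{[F_0,F_1]}(\theta)\le \hat I(\hat\gamma)$.'' The phrase ``cost-non-increasing'' in Definition~\ref{def:optimal} refers to the \emph{finite} integral $\iint c\,d(\gamma'-\gamma)$, not to the average cost per period; a compactly supported modification perturbs an infinite total by a bounded amount and therefore leaves the average cost per period unchanged. Each $\gamma_i$ in your sequence still has average cost exactly $\hat I(\hat\gamma)$, and nothing prevents the limit $\gamma_\theta$ from having a different average cost. One can rescue the inequality by a long-interval comparison (apply Lemma~\ref{lem:monotone} on $[-N,N]$, bound the boundary contribution using the fact that your lift has uniformly bounded displacement, and divide by $2N$), but this is a substantive extra argument you have not supplied. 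The same gap undermines your surjectivity claim: the limit $\gamma_\theta$ of the modified lifts has no reason to project back to~$\hat\gamma^\ast$, since the modifications may well have rearranged mass.

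The paper sidesteps all of this by introducing $S=\{(x,y):c(x,y)=\hat c(x,y)\}$. A globally optimal $\gamma_{\theta_0}$ is shown to be supported on~$S$ (otherwise a \emph{periodic} vertical shift of part of its support strictly reduces the average cost), hence is $\hat c$-locally optimal, hence projects to a minimizer of~\eqref{eq:1}. Conversely, any minimizer on $\tset\times\tset$ lifts to an $\hat c$-locally optimal plan with support in~$S$; since $c=\hat c$ on the support and $c\ge\hat c$ everywhere, such a plan is automatically $c$-locally optimal and therefore \emph{equals} some~$\gamma_\theta$ by Theorem~\ref{thm:shifts} without any modification. This is what makes surjectivity immediate. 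Finally, your injectivity argument is incorrect as stated: a convex coercive function \emph{can} be constant on a unit interval (take $\theta\mapsto\max(0,|\theta|-1)$), so convexity alone does not rule out $\theta$ and $\theta+1$ both lying in $\arg\min C_{[F_0,F_1]}$.
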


\begin{proof}
  A transport plan $\gamma$ on~$\tset\times\tset$
  minimizes~\eqref{eq:1} if it is a projection of a transport plan
  on~$\rset\times\rset$ that locally minimizes the transport cost
  defined by the cost function $\hat c(x, y) = \min_{k\in\zset} c(x, y
  + k)$ (see introduction; $\min$ here is attained because of the integer shift
  invariance and growth conditions \eqref{eq:5},~\eqref{eq:6}). %

  Denote $S = \{(x, y)\colon c(x, y) = \hat c(x, y)\}$ and observe
  that the support of the globally optimal plan~$\gamma_{\theta_0}$
  lies within~$S$: indeed, if it did not, there would exist a
  (nonlocal but periodic) modification of~$\gamma_{\theta_0}$ bringing
  some of the mass of each period to~$S$ and thus reducing the average
  cost. %
  Therefore $\gamma_{\theta_0}$ is locally optimal with respect to the
  cost~$\hat c(x, y)$ and its projection to~$\tset\times\tset$
  minimizes~\eqref{eq:1}.

  Conversely, a minimizing transport plan on $\tset\times\tset$ can be
  lifted to $\rset\times\rset$ in such a way that its support lies
  inside~$S$ (translations of arbitrary pieces of support by integer
  increments along $x$ and~$y$ axes are allowed because they leave
  $\hat c(x, y)$ invariant). %
  Therefore its average cost per period cannot be less than that of a
  globally optimal transport plan on~$\rset\times\rset$.
\end{proof}

\section{Fast global transport optimization}
\label{sec:algor-transp-optim}

In a typical application, such as the image processing problem
described in the introduction, measures $\mu_0$ and~$\mu_1$ come in
the form of \emph{histograms}, i.e., discrete distributions supported
on subsets $\hat X = \{\hat x_1, \hat x_2, \dots, \hat x_{n_0}\}$ and
$\hat Y = \{\hat y_1, \hat y_2, \dots, \hat y_{n_1}\}$ of the unit
circle. %
These two sets may coincide. %
In what follows we replace $\hat X$ and~$\hat Y$ with their lifts to
the universal cover $X$ and~$Y$ and assume that the points of the
latter pair of sets are sorted and numbered in an increasing order:
\begin{displaymath}
  \begin{gathered}
    \dots < x_{-1} < x_0 = x_{n_0} - 1 \le 0 < x_1 < \dots < x_{n_0}
    \le 1 < x_{n_0 + 1} = x_1 + 1 < \dots,\\
    \dots < y_{-1} < y_0 = y_{n_1} - 1 \le 0 < y_1 < \dots < y_{n_1}
    \le 1 < y_{n_1 + 1} = y_1 + 1 < \dots.
  \end{gathered}
\end{displaymath}
Denote masses of these points by $\mu_0(\{x_i\}) = m^{(0)}_i$,
$\mu_1(\{y_j\}) = m^{(1)}_j$; these are assumed to be arbitrary positive
real numbers satisfying $\sum_{1\le i\le n_0} m^{(0)}_i = \sum_{1\le j\le n_1}
m^{(1)}_j = 1$. %

\subsection{Computation of the average cost and its derivative}
\label{sec:comp-aver-cost}

Define $j(\theta)$ as the index of $\min \{y_j\colon F_1^\theta(y_j) >
0\}$ and denote $y_1^\theta = y_{j(\theta)}$, $y_2^\theta =
y_{j(\theta) + 1}$, \dots, $y_{n_1}^\theta = y_{j(\theta) + n_1 - 1}$.
All the values
\begin{equation}
  \label{eq:21}
  F_0(x_1), F_0(x_2), \dots, F_0(x_{n_0}), F_1^\theta(y_1^\theta),
  F_1^\theta(y_2^\theta), \dots, F_1^\theta(y_{n_1}^\theta)
\end{equation}
belong to the segment $(0, 1]$. %
We now sort these values into an increasing sequence, denote its
elements by $v_{(1)} \le v_{(2)} \le \dots \le v_{(n_0 + n_1)}$ and
set $v_{(0)} = 0$. %
Note that for each $v$ such that $v_{(k - 1)} < v < v_{(k)}$ with
$1\le k\le n_0 + n_1$ the values $x_{(k)} = F_0^{-1}(v)$ and $y_{(k)}
= (F_1^\theta)^{-1}(v)$ are uniquely defined and belong to $X$,~$Y$. %
It is now easy to write an expression for the function $C_{[F_0, F_1]}$:
\begin{equation}
  \label{eq:22}
  C_{[F_0, F_1]}(\theta) = \sum_{1\le k\le n_0 + n_1}
  c(x_{(k)}, y_{(k)})\, (v_{(k)} - v_{(k - 1)}).
\end{equation}

Observe that, as the parameter $\theta$ increases by~$\Delta\theta$,
those $v_{(k)}$ that correspond to values $F_1^\theta$ decrease by the
same increment. %
Let $F^\theta_1(y_{j_0})$ be such a value. %
As it appears in~\eqref{eq:22} twice, first as $v_{(k)}$ and then as
$-v_{(k - 1)}$ in the next term of the sum, it will make two
contributions to the derivative $C_{[F_0, F_1]}'(\theta)$:
$-c(F_0^{-1}(F_1^\theta(y_{j_0})), y_{j_0})$ and
$c(F_0^{-1}(F_1^\theta(y_{j_0})), y_{j_0 + 1})$ (see
fig.~\ref{fig:deriv}, top). %

\begin{figure}[htp]
  \centerline{\subfigure[$\theta$ not exceptional]%
    {\begin{tikzpicture}
        \draw[->,>=stealth] (0, 0) -- (0, 3) node [left] {$v$}; %
        \draw[->,>=stealth] (-.5, 0) -- (5.5, 0) node [above] {$u$}; %
        \draw (0, 0) node [above left] {$O$}; %
        \begin{scope}[very thick, xshift=1.25cm, yshift=2.6cm]
          \draw (0,-1.75) -- (0, 0) node [left] {$F_0$} -- (1.5, 0); %
          \draw[dotted] (0, -2) -- (0, -1.75); %
          \draw[dotted] (1.5, 0) -- (1.75, 0); %
          \draw[thin, dotted] (0, -1.75) -- (0, -2.6) node [below]
          {$F_0^{\smash{-1}}(F_1^{\smash\theta}(y_j)) \equiv X$ };
        \end{scope}
        \begin{scope}[very thick, xshift=3.5cm, yshift=1.75cm]
          \draw(0, -1) -- (0, 0) -- (1.5, 0) node [right]
          {$F_1^\theta$} -- (1.5, .35); %
          \draw[dotted] (0, -1) -- (0, -1.25); %
          \draw[dotted] (1.5, .35) -- (1.5, .6); %
          \draw[densely dashed] (0, -.5) -- (1.5, -.5) node [right]
          {$F_1^{\theta + \Delta\theta}$} -- (1.5, 0); %
          \draw[thin, dotted] (0, -1.25) -- (0, -1.75) node [below]
          {$\mathstrut y_j$}; %
          \draw[thin, dotted] (1.5, 0) -- (1.5, -1.75) node [below]
          {$\mathstrut y_{j + 1}$}; %
          \draw[thin, dotted] (0, 0) -- (-3.5, 0); %
          \draw[thin, dotted] (0, -.5) -- (-3.5, -.5); %
          \draw (-3.5, -.25) node [left] {$\Delta\theta$};
        \end{scope}
        \draw[<->] (1.3, 1) -- (3.45, 1); %
        \draw (2.4, 1) node [below] {$c(X, y_j)$}; %
        \draw[<->] (1.3, 2) -- (4.95, 2); %
        \draw (3.2, 2) node [above] {$c(X, y_{j + 1})$};
      \end{tikzpicture}}}

  \centerline{\subfigure[$\theta$ exceptional, case $C'_{[F_0,
      F_1]}(\theta - 0)$]%
    {\begin{tikzpicture}
        \draw[->,>=stealth] (0, 0) -- (0, 3) node [left] {$v$}; %
        \draw[->,>=stealth] (-.5, 0) -- (5.5, 0) node [above] {$u$}; %
        \draw (0, 0) node [above left] {$O$}; %
        \begin{scope}[very thick, xshift=.75cm, yshift=2cm]
          \draw (0, -1.25) -- (0, 0) node [below left] {$F_0$} -- (1,
          0) -- (1, .75); %
          \draw[dotted] (0, -1.25) -- (0, -1.5); %
          \draw[dotted] (1, .75) -- (1, 1); %
          \draw[thin, dotted] (1, 0) -- (1, -2) node [below]
          {$F_0^{\smash{-1}}(F_1^{\smash\theta}(y_j))$};
        \end{scope}
        \begin{scope}[very thick, xshift=3.25cm, yshift=2cm]
          \draw (0, -1) -- (0, 0) -- (.75, 0) node [right]
          {$F_1^\theta$} -- (.75, .75); %
          \draw[dotted] (0, -1) -- (0, -1.25); %
          \draw[dotted] (.75, .75) -- (.75, 1); %
          \draw[densely dashed] (0, 0) -- (0, .5) -- (.75, .5) node
          [right] {$F_1^{\theta - \Delta\theta}$}; %
          \draw[thin, dotted] (0, -1.25) -- (0, -2) node [below]
          {$\mathstrut y_j$}; %
          \draw[thin, dotted] (.75, 0) -- (.75, -2) node [below]
          {$\mathstrut y_{j + 1}$}; %
          \draw[thin, dotted] (.75, .5) -- (-3.25, .5); \draw[thin,
          dashed] (.75, 0) -- (-3.25, 0); \draw (-3.25, .25) node
          [left] {$\Delta\theta$};
        \end{scope}
        \draw[<->] (1.8, 2.25) -- (3.95, 2.25); %
        \draw[<->] (1.8, 1.75) -- (3.2, 1.75); %
      \end{tikzpicture}} %
    \subfigure[$\theta$ exceptional, case $C'_{[F_0, F_1]}(\theta +
    0)$]%
    {\begin{tikzpicture}
        \draw[->,>=stealth] (0, 0) -- (0, 3) node [left] {$v$}; %
        \draw[->,>=stealth] (-.5, 0) -- (5.5, 0) node [above] {$u$}; %
        \draw (0, 0) node [above left] {$O$}; %
        \begin{scope}[very thick, xshift=.75cm, yshift=2cm]
          \draw (0, -1.25) -- (0, 0) node [above left] {$F_0$} -- (1,
          0) -- (1, .75); %
          \draw[dotted] (0, -1.25) -- (0, -1.5); %
          \draw[dotted] (1, .75) -- (1, 1); \draw[thin, dotted] (0, 0)
          -- (0, -2) node [below]
          {$F_0^{\smash{-1}}(F_1^{\smash\theta}(y_j) - 0)$}; %
        \end{scope}
        \begin{scope}[very thick, xshift=3.25cm, yshift=2cm]
          \draw (0, -1) -- (0, 0) -- (.75, 0) node [right]
          {$F_1^\theta$} -- (.75, .75); %
          \draw[dotted] (0, -1) -- (0, -1.25); %
          \draw[dotted] (.75, .75) -- (.75, 1); %
          \draw[densely dashed] (0, -.5) -- (.75, -.5) node [right]
          {$F_1^{\theta + \Delta\theta}$} -- (.75, 0); %
          \draw[thin, dotted] (0, -1.25) -- (0, -2) node [below]
          {$\mathstrut y_j$}; %
          \draw[thin, dotted] (.75, 0) -- (.75, -2) node [below]
          {$\mathstrut y_{j + 1}$}; %
          \draw[thin, dashed] (0, 0) -- (-3.25, 0); \draw[thin,
          dotted] (0, -.5) -- (-3.25, -.5); \draw (-3.25, -.25) node
          [left] {$\Delta\theta$};
        \end{scope}
        \draw[<->] (.8, 1.75) -- (3.95, 1.75); %
        \draw[<->] (.8, 1.25) -- (3.2, 1.25); %
      \end{tikzpicture}}}

  \caption{Derivation of expressions
    \protect\eqref{eq:23},~\protect\eqref{eq:24} for~$C_{[F_0,
      F_1]}'(\theta\pm 0)$. %
    Thick lines show fragments of complete graphs of
    $F_0$,~$F_1^\theta$ corresponding to $j$th terms in
    \protect\eqref{eq:23},~\protect\eqref{eq:24}, thin dashed line
    (bottom) marks the common value of $F_0$ and $F_1^\theta$. %
    Note that $X = F_0^{-1}(F^\theta_1(y_j))$ is equivalently
    expressed as $\inf\{x\colon F_0(x) > F^\theta_1(y_j)\}$. %
  }
  \label{fig:deriv}
\end{figure}
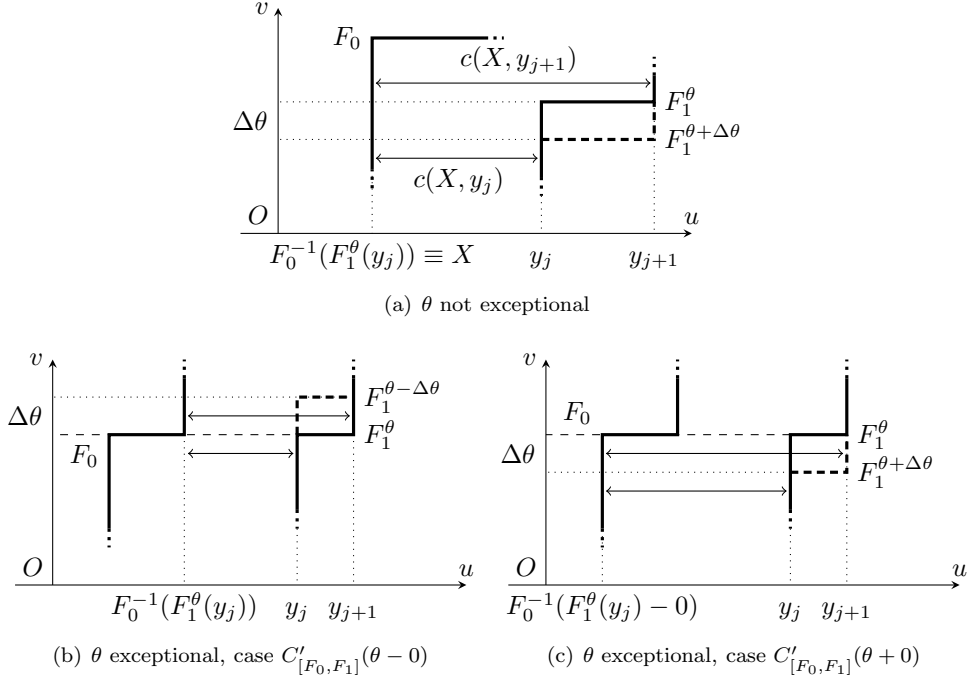

Moreover, there are exceptional values of~$\theta$ for which two of
the values in~\eqref{eq:21} coincide and their ordering in the
sequence $(v_{(k)})$ changes. %
For such values of~$\theta$ the derivative $C_{[F_0, F_1]}'$ has
different right and left limits, as illustrated in
fig.~\ref{fig:deriv}, bottom:
\begin{gather}
  \label{eq:23}
  C_{[F_0, F_1]}'(\theta - 0) = \!\!\!\sum_{1\le j\le n_1}\!\!\!
  \bigl(c(F_0^{-1}(F^\theta_1(y_j)), y_{j + 1})
  - c(F_0^{-1}(F^\theta_1(y_j)), y_j)\bigr), \\
  \label{eq:24}
  C_{[F_0, F_1]}'(\theta + 0) = \!\!\!\!\!\sum_{1\le j\le n_1}\!\!\!\!
  \bigl(c(F_0^{-1}(F^\theta_1(y_j) - 0), y_{j + 1})
  - c(F_0^{-1}(F^\theta_1(y_j) - 0), y_j)\bigr).
\end{gather}
If $\theta$ is not exceptional, the value of~$C_{[F_0, F_1]}'(\theta)$
is given by the first of these formulas. %

The function $C_{[F_0, F_1]}$ is therefore piecewise affine (see in
particular fig.~\ref{fig:cost_function}, where this function is
plotted for atomic marginals $\mu_0$,~$\mu_1$). %
Moreover, from the Monge condition~\eqref{eq:4} it follows that
$C_{[F_0, F_1]}'(\theta - 0) < C_{[F_0, F_1]}'(\theta + 0)$ at
exceptional points, giving an alternative proof of convexity
of~$C_{[F_0, F_1]}(\theta)$ in the discrete case. %

\begin{lemma}
  \label{lm:deriv-opcount}
  Values of $C$ and its left and right derivatives can be computed for
  any~$\theta$ using at most $O(n_0 + n_1)$ comparisons and
  evaluations of~$c(x, y)$.
\end{lemma}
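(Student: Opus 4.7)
The plan rests on a single observation: because $F_0$ and $F_1^\theta$ are monotone nondecreasing and the atoms $x_1 < \cdots < x_{n_0}$ and $y_1^\theta < \cdots < y_{n_1}^\theta$ are already sorted, the two subsequences $(F_0(x_i))_{i=1}^{n_0}$ and $(F_1^\theta(y_j^\theta))_{j=1}^{n_1}$ appearing in~\eqref{eq:21} are themselves sorted. The full sequence $v_{(1)} \le \cdots \le v_{(n_0+n_1)}$ is therefore the merge of two sorted lists and can be produced by the standard linear merge procedure in $O(n_0+n_1)$ comparisons.

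The algorithm proceeds as follows. First, locate $j(\theta) = \min\{j : F_1(y_j) > \theta\}$ by binary search on the sorted sequence $(F_1(y_j))$ in $O(\log n_1)$ comparisons, which fixes the labelling $y_1^\theta, \ldots, y_{n_1}^\theta$. Second, perform a merge with two pointers $i$ and $j$ initialized to $1$: at each step compare $F_0(x_i)$ with $F_1^\theta(y_j^\theta)$, take the smaller as the next $v_{(k)}$, and advance the corresponding pointer. Throughout the merge, the pair $(x_i, y_j^\theta)$ at the current pointer positions is precisely the pair $(x_{(k+1)}, y_{(k+1)})$ that remains constant on the open interval $(v_{(k)}, v_{(k+1)})$, because $F_0^{-1}$ and $(F_1^\theta)^{-1}$ jump only at points of the merged sequence. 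Hence one evaluation of $c$ per step suffices to accumulate the contribution $c(x_{(k)}, y_{(k)})(v_{(k)} - v_{(k-1)})$ to the sum~\eqref{eq:22}, for a total of $O(n_0+n_1)$ operations.

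For the one-sided derivatives~\eqref{eq:23}--\eqref{eq:24}, note that each term is indexed by a single $j \in \{1, \ldots, n_1\}$ and requires two evaluations of $c$ together with the value $F_0^{-1}(F_1^\theta(y_j))$ (or $F_0^{-1}(F_1^\theta(y_j)-0)$). These inverse values are precisely the $x_i$-pointer values at the moments when the merge consumes a $y$-entry, read as the ``right'' or ``left'' neighbour of $F_1^\theta(y_j^\theta)$ in the merged sequence. They are thus produced as a by-product of the same single pass, and the two derivative sums can be accumulated in parallel with the cost sum at the expense of at most $2n_1$ additional evaluations of $c$.

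No step presents a real obstacle; the only point that needs care is the claim that the running $x_i, y_j^\theta$ indices during the merge give the correct values of $F_0^{-1}$ and $(F_1^\theta)^{-1}$ on each subinterval $(v_{(k-1)}, v_{(k)})$, and in the appropriate one-sided sense at the exceptional $\theta$ where two entries of~\eqref{eq:21} coincide. This follows directly from the right-continuity convention in~\eqref{eq:9} together with the geometric picture of fig.~\ref{fig:deriv}, which shows that the distinction between $C_{[F_0, F_1]}'(\theta-0)$ and $C_{[F_0, F_1]}'(\theta+0)$ corresponds to resolving a tie in the merge in one of two possible orders. Summing the binary search, the merge, and the constant-work bookkeeping per step yields the advertised bound of $O(n_0 + n_1)$ comparisons and evaluations of $c$.
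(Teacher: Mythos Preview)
Your proposal is correct and follows essentially the same approach as the paper: merge the two already-sorted subsequences of~\eqref{eq:21} with a two-pointer scan, storing the current $(x_{(k)}, y_{(k)})$ pair and accumulating the sums~\eqref{eq:22}--\eqref{eq:24} in a single pass. Your write-up is somewhat more detailed than the paper's, in particular the explicit binary search for $j(\theta)$ and the remark that tie-breaking in the merge distinguishes the two one-sided derivatives, but these are refinements of the same argument rather than a different route.
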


\begin{proof}
  Sorting the $n_0 + n_1$ values~\eqref{eq:21} into an increasing
  sequence requires $n_0 + n_1 - 1$ comparisons (one starts with
  comparing $F_0(x_1)$ and $F^\theta_1(y^\theta_1)$ to determine
  $v_{(1)}$, and after this each of the remaining values is considered
  once until there remains only one value, which is assigned
  to~$v_{(n_0 + n_1)}$ with no further comparison). %
  At the same time, pointers to $x_{(k)}$ and~$y_{(k)}$ should be
  stored. %
  After this preliminary stage, to find the values for $C_{[F_0,
    F_1]}$ and its one-sided derivatives it suffices to evaluate each
  of the $n_0 + n_1$ terms in~\eqref{eq:22} and to take into account
  the corresponding contribution of plus or minus $c(x_{(k)}, y_{(k)})$
  to the value of~$C_{[F_0, F_1]}'(\theta)$, paying attention to
  whether the value of~$\theta$ is exceptional or not. %
  All this can again be done in $O(n_0 + n_1)$ operations.
\end{proof}

\subsection{Transport optimization algorithm}
\label{sec:algorithm}

Fix $\epsilon > 0$ and set $L = \max\{\underline L, \overline
L\}$. %
Recall that $\underline L$,~$\overline L$, as well as the parameters
$\underline\Theta$,~$\overline\Theta$ that are used in the algorihtm
below, are defined by explicit formulas in Lemma~\ref{the:convex} and
do not depend on measures $\mu_0$,~$\mu_1$. %
The minimum of~$C_{[F_0, F_1]}(\theta)$ can be found to accuracy
$\epsilon$ using the following binary search technique:

\bigskip

\begin{enumerate}
\item\label{item:3} Initially set $\underline\theta :=
  \underline\Theta$ and $\overline\theta := \overline\Theta$, where
  $\underline\Theta$,~$\overline\Theta$ are defined in
  Lemma~\ref{the:convex}.
\item\label{item:4} Set $\theta := \frac 12(\underline\theta +
  \overline\theta)$.
\item\label{item:5} Compute $C_{[F_0, F_1]}'(\theta - 0)$, $C_{[F_0, F_1]}'(\theta + 0)$.
\item If $C_{[F_0, F_1]}'(\theta - 0)\le 0\le C_{[F_0, F_1]}'(\theta + 0)$, then $\theta$ is the
  required minimum; stop.
\item\label{item:6} If $\overline\theta - \underline\theta <
  \epsilon/L$, then compute $C_{[F_0, F_1]}(\underline\theta)$,
  $C_{[F_0, F_1]}(\overline\theta)$, solve the linear
  equation
  \begin{equation}
    \label{eq:25}
    C_{[F_0, F_1]}(\underline\theta) +  C_{[F_0, F_1]}'(\underline\theta + 0)(\theta -
    \underline\theta) =
    C_{[F_0, F_1]}(\overline\theta) + C_{[F_0, F_1]}'(\overline\theta - 0)(\theta -
    \overline\theta)
  \end{equation}
  for $\theta$, and stop.
\item\label{item:8} Otherwise set $\underline\theta := \theta$ if
  $C_{[F_0, F_1]}'(\theta + 0) < 0$, or $\overline\theta := \theta$ if $C_{[F_0, F_1]}'(\theta -
  0) > 0$.
\item Go to step~\ref{item:4}.
\end{enumerate}

\bigskip

It follows from inequalities~\eqref{eq:17} of Lemma~\ref{the:convex}
that the minimizing value of~$\theta$ belongs to the segment
$[\underline\Theta, \overline\Theta]$. %
Therefore at all steps
\begin{equation}
  \label{eq:26}
  C_{[F_0, F_1]}'(\underline\theta + 0) \le 0 \le C_{[F_0, F_1]}'(\overline\theta - 0)
\end{equation}
and the segment $[\underline\theta, \overline\theta]$ contains the
minimum of~$C$. %

Step~\ref{item:6} requires some comments. %
By convexity, $-\underline L\le C_{[F_0, F_1]}'(\theta \pm 0) \le
\overline L$ for all $\underline\Theta \le \theta \le
\overline\Theta$, i.e., $|C_{[F_0, F_1]}'(\theta\pm 0)| \le L$ at all
steps. %
When $\overline\theta - \underline\theta < \epsilon/L$, this bound
ensures that for any $\theta'$ in $[\underline\theta,
\overline\theta]$ the minimal value of~$C$ is within $\epsilon/L \cdot
L = \epsilon$ from $C_{[F_0, F_1]}(\theta')$. %
If there is a single exceptional value of~$\theta$ in that interval,
then it is located precisely at the solution of~\eqref{eq:25} and must
be a minimum of~$C$ because of~\eqref{eq:26}, so the final value
of~$\theta$ is the exact solution; otherwise it is an approximation
with guaranteed accuracy. %

The final value of~$\theta$ will certainly be exact when masses of all
atoms are rational numbers having the least common denominator~$M$ and
$\epsilon < 1/M$. %
Indeed, in this case any interval $[\underline\theta,
\overline\theta]$ of length $\epsilon$ can contain at most one
exceptional value of~$\theta$.

Since at each iteration the interval $[\underline\theta,
\overline\theta]$ is halved, step~\ref{item:6} will be achieved in
$O(\log_2((\overline\Theta - \underline\Theta)/(\epsilon/L)))$
iterations. %
By Lemma~\ref{lm:deriv-opcount} each instance of step~\ref{item:5}
(and equation~\eqref{eq:25}) takes $O(n_0 + n_1)$ operations. %
Thus we obtain the following result. 

\begin{theorem}
  \label{thm:complexity}
  The above binary search algorithm takes $O((n_0 +
  n_1)\log(1/\epsilon))$ comparisons and evaluations of $c(x, y)$ to
  terminate. %
  The final value of~$\theta$ is within $\epsilon/L$ from the global
  minimum, and $C_{[F_0, F_1]}(\theta) \le \min_\theta C_{[F_0,
    F_1]}(\theta) + \epsilon$. %
  When all masses $m^{(0)}_i$,~$m^{(1)}_j$ are rational with the least
  common denominator~$M$, initializing the algorithm with $\epsilon =
  1/2M$ leads to an exact solution in $O((n_0 + n_1)\log M)$
  operations.
\end{theorem}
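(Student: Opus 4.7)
The plan is to verify the three claims of the theorem by assembling the pieces already in place: iteration count of the binary search, per-iteration cost from Lemma~\ref{lm:deriv-opcount}, the invariant~\eqref{eq:26} plus convexity bound on $|C_{[F_0,F_1]}'|$, and a rationality argument handling the exact case.

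First I would bound the number of iterations. At each pass through steps \ref{item:4}--\ref{item:8} the interval $[\underline\theta,\overline\theta]$ is either declared a solution or halved; halting at step~\ref{item:6} occurs once $\overline\theta - \underline\theta < \epsilon/L$. Since the initial length $\overline\Theta - \underline\Theta$ is a constant depending only on the cost function (via formulas~\eqref{eq:18},~\eqref{eq:19},~\eqref{eq:20}) and not on the marginals, the number of iterations is at most $\lceil \log_2(L(\overline\Theta - \underline\Theta)/\epsilon) \rceil = O(\log(1/\epsilon))$. Lemma~\ref{lm:deriv-opcount} gives $O(n_0 + n_1)$ operations for each derivative evaluation at step~\ref{item:5}, and equation~\eqref{eq:25} is likewise solved in the same cost after one more evaluation of $C_{[F_0,F_1]}$ at the endpoints. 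Multiplying yields the $O((n_0+n_1)\log(1/\epsilon))$ bound.

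Next I would argue approximation accuracy. The invariant~\eqref{eq:26} is preserved because at step~\ref{item:8} we only move an endpoint inward when the derivative sign shows the minimum lies in the remaining half; by convexity of $C_{[F_0,F_1]}$ established in Lemma~\ref{the:convex}, the global minimizer $\theta^*$ therefore belongs to $[\underline\theta,\overline\theta]$ throughout. If the algorithm exits at step~\ref{item:6}, the final returned $\theta$ lies in the interval of length $\epsilon/L$ containing $\theta^*$, so $|\theta - \theta^*| \le \epsilon/L$. Since $|C_{[F_0,F_1]}'| \le L$ on $[\underline\Theta,\overline\Theta]$ by convexity and~\eqref{eq:17}, the mean value inequality gives $C_{[F_0,F_1]}(\theta) - C_{[F_0,F_1]}(\theta^*) \le L \cdot (\epsilon/L) = \epsilon$. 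Exit at the one-sided derivative test returns an exact minimum by~\eqref{eq:26}.

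Finally, for the exact case I would exploit the discreteness of exceptional values. When every $m^{(0)}_i$ and $m^{(1)}_j$ is a multiple of $1/M$, each value in the list~\eqref{eq:21} is a multiple of $1/M$, so exceptional $\theta$ (those for which two entries of~\eqref{eq:21} collide) form a subset of $(1/M)\zset + \text{const}$ and are therefore spaced at least $1/M$ apart. Initializing with $\epsilon = 1/(2M)$ ensures that at the moment step~\ref{item:6} is entered the interval $[\underline\theta,\overline\theta]$ has length below $1/M$ and thus contains at most one exceptional value. Between consecutive exceptional points $C_{[F_0,F_1]}$ is affine, so \eqref{eq:25} is the intersection of two affine pieces and locates that unique exceptional $\theta$ exactly; by the one-sided derivative bounds on either side this is the global minimum. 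The operation count $O((n_0+n_1)\log M)$ follows from the first paragraph with $\epsilon = 1/(2M)$.

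The main obstacle is the exact case: one must justify that \eqref{eq:25} actually returns the true minimizer rather than some spurious affine-extrapolation artifact. This hinges precisely on the observation that the shrunken interval cannot straddle two kinks of the piecewise affine $C_{[F_0,F_1]}$, and that a single interior kink is exactly where the left and right affine extensions from the endpoints meet — a property that would fail without the $\epsilon < 1/M$ hypothesis.
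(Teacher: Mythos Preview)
Your proposal is correct and follows essentially the same route as the paper: the paper does not give a separate formal proof but derives the theorem from the discussion immediately preceding it, and you have accurately assembled those pieces---iteration count from interval halving, per-iteration cost from Lemma~\ref{lm:deriv-opcount}, the Lipschitz bound $|C_{[F_0,F_1]}'|\le L$ from convexity and~\eqref{eq:17}, and the $1/M$ spacing of exceptional $\theta$ values in the rational case. Your explicit remark that exceptional values lie in $(1/M)\zset$ because both $F_0(x_i)$ and $F_1(y_j)$ are multiples of $1/M$ is a welcome clarification the paper leaves implicit.
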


\subsection{Experiments}
\label{sec:experiments}

\begin{figure}[htp]
  \centerline{\subfigure[Average computing time, sec., vs ${n_0 +
      n_1}$ for $\epsilon = 10^{-10}$.]%
    {\includegraphics[width=.48\textwidth]{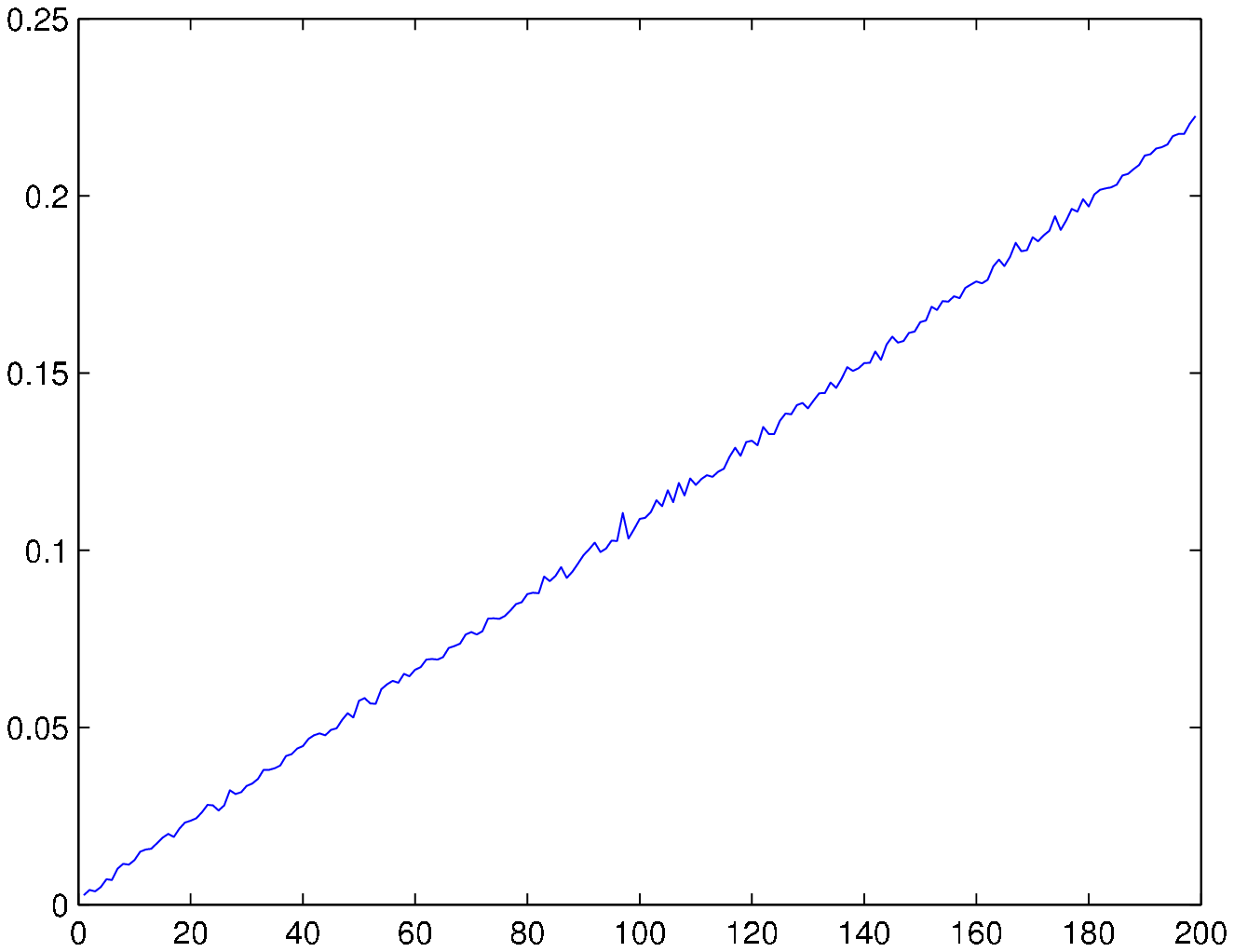}}
    \quad %
    \subfigure[Average computing time, sec., vs $\log_{10} \epsilon$
    for $n_0=n_1=10$.]%
    {\includegraphics[width=.48\textwidth]{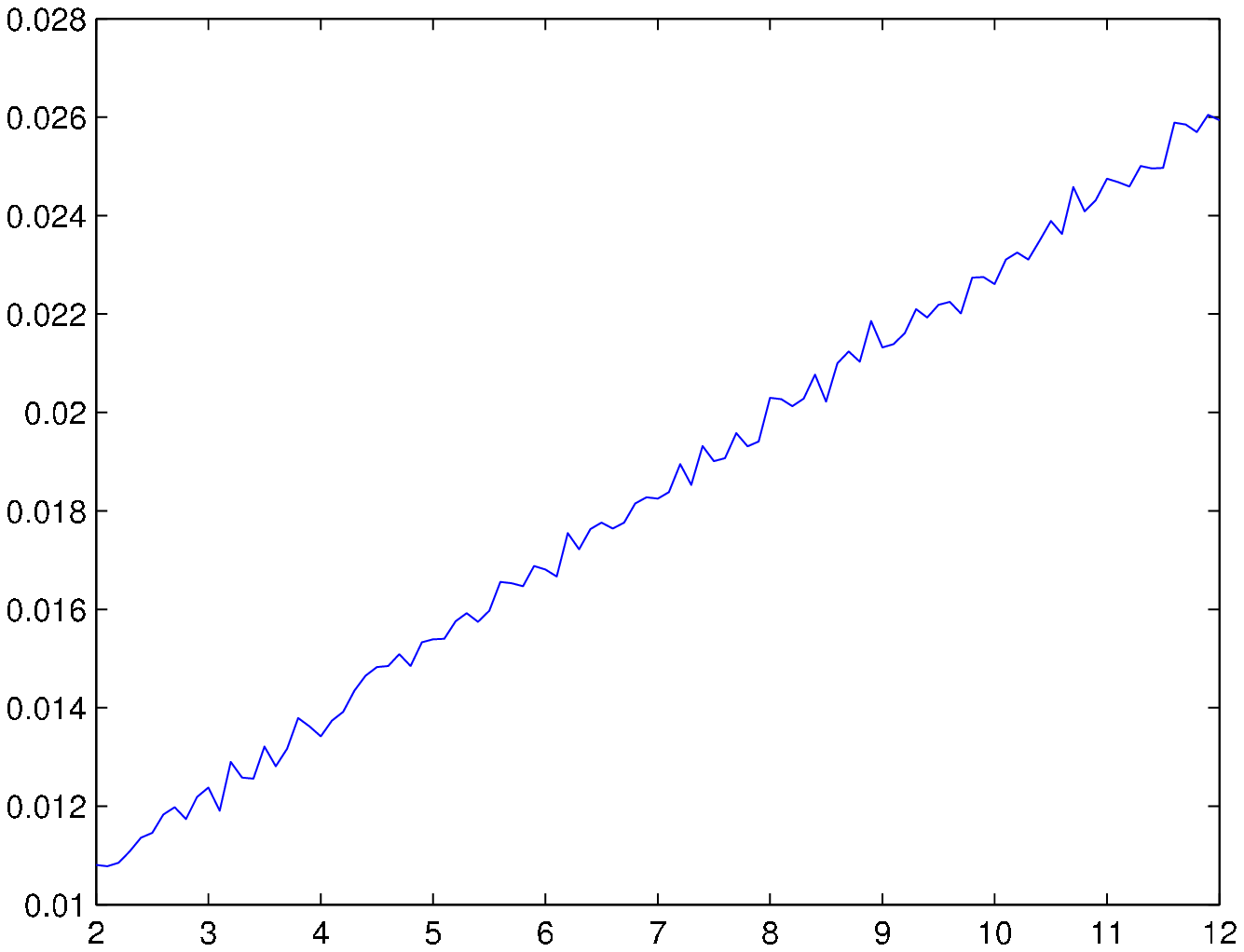}}}
  \caption{Average computing time of the algorithm for different
    values of $(n_0+n_1)$ and $\log_{10}\epsilon$. %
    The experiment was performed on a PC with a $3.00\,\text{GHz}$
    processor.}
  \label{fig:complexity}
\end{figure}

\begin{figure}[htp]
  \centering \subfigure[Top: F. Maliavin, \textit{Whirlwind} (1916).\newline %
  (b) Right: P. Puvis de Chavanne, \textit{Jeunes filles au bord de la
    mer} (1879).]%
  {\vtop{\vbox to
      0pt{\null}\hbox{\includegraphics[width=0.5\textwidth]{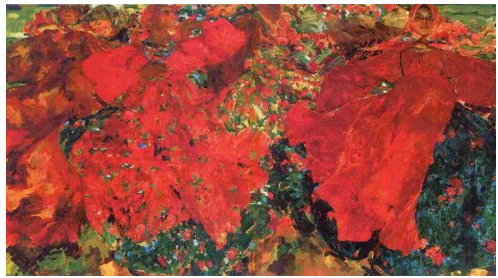}}\bigskip}}\quad
  \subfigure%
  {\vtop{\vbox to 0pt{\null}\hbox{\includegraphics[width=0.28\textwidth]{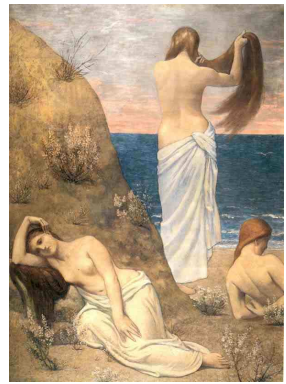}}}}\\
  \subfigure[Puvis' hues transported to Maliavin's canvas.]%
  {\includegraphics[width=.8\textwidth]{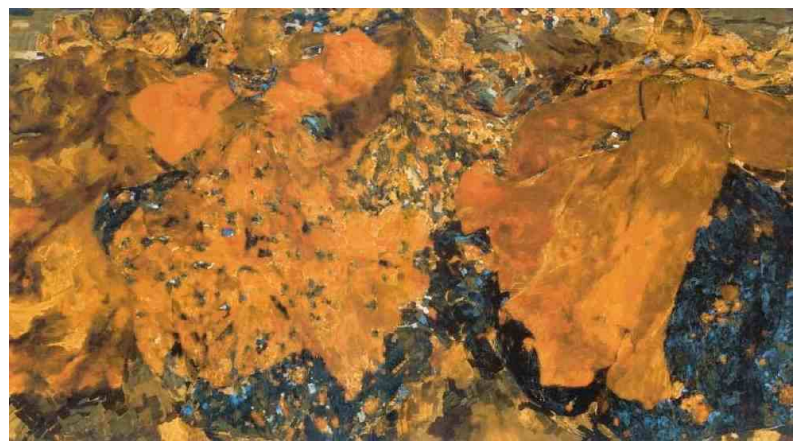}}
  \caption{Optimal matching of the hue component of color.}
  \label{fig:fun}
\end{figure}

We tested experimentally the estimates of Theorem~\ref{thm:complexity}
for time complexity as a function of parameters of the problem. %
The average computing time of the algorithm for different values of
$n_0 + n_1$ and $\epsilon$ is illustrated in
fig.~\ref{fig:complexity}. %
These results have been obtained using the following procedure. %
For each value of $n_0$ and $n_1$, points $\{\hat x_1, \hat x_2,
\dots, \hat x_{n_0}\}$ and $\{\hat y_1, \hat y_2, \dots, \hat
y_{n_1}\}$, which constitute the support of distributions $\mu_0$
and~$\mu_1$, are drawn independently from the uniform distribution on
$[0, 1]$ and sorted. %
The masses $m^{(0)}_i$,~$i=1 \dots n_0$ and $m^{(1)}_j$,~$j=1\dots
n_1$ are then drawn from the uniform distribution and normalized such
that $\sum_{1 \le i\le n_0} m^{(0)}_i = \sum_{1 \le j\le n_1}
m^{(1)}_j = 1$. %
Finally the transport cost is minimized for $c(x, y) = |x - y|$. %
The code used to produce this figure is available online at the web
site of the OTARIE project
\url{http://www.mccme.ru/~ansobol/otarie/software.html}. %

In the first experiment the value of $\epsilon$ was set to~$10^{-10}$,
the algorithm was run $10$~times for each pair $(n_0, n_1)$ with $1
\le n_0, n_1 \le 100$, and the computing times were averaged. %
In the second experiment $(n_0, n_1)$ was fixed at $(10, 10)$ and the
average computing time was similarly computed for different values
of~$\epsilon$. %
The averaged computing times for the two experiments are plotted in
fig.~\ref{fig:complexity}. %
Observe the manifest linear dependence of computing time on $n_0 +
n_1$ and~$\log\epsilon$.

The next figure is a concrete, if not entirely serious, illustration
of optimal matching in the case of distributions on the ``color
circle.'' %

Recall that in the HSL (Hue, Saturation, and Lightness) color model,
the color space is represented in cylindrical coordinates. %
The polar angle corresponds to the \emph{hue}, or the degree to which
a color can be described as similar to or different from other colors
(as opposed to difference in saturation or lightness between shades of
the same color). %

We chose two famous paintings, one Russian and one French, whose
highly different coloring is characteristic of the two painters, the
Expressionist Filipp Maliavin (1869--1940) and the Symbolist Pierre
Puvis de Chavannes (1824--1898). %
An optimal matching of the hue distributions according to the linear
cost $c(x, y) = |x - y|^2$ was used to substitute hues of the first
painting with the corresponding hues of the second one while
preserving the original values of saturation and brightness. %
In spite of the drastic change in coloring, the optimality of matching
ensures that warm and cold colors retain their quality and the overall
change of aspect does not feel arbitrary or artificial. %

\section{Related algorithmic work}
\label{sec:disc-concl}

Fast algorithms for the transportation problem on the circle, with the
Euclidean distance $|x - y|$ as a cost, have been proposed in a number
of works. %
Karp and Li \cite{Karp.R:1975} consider an \emph{unbalanced} matching,
where the total mass of the two histograms are not equal and elements
of the smaller mass have to be optimally matched to a subset of
elements of the larger mass. %
A balanced optimal matching problem has later been considered
independently by Werman et al \cite{Werman.M:1986}; clearly, the
balanced problem can always be treated as a particular case of the
unbalanced one. %
In both of these works $O(n\log n)$ algorithms are obtained for the
case where all points have unit mass. %

Aggarwal et al \cite{Aggarwal.A:1992} present an algorithm improving
Karp and Li's results for an unbalanced transportation problem on the
circle with general integer weights and the same cost function~$|x -
y|$. %
They also consider a general cost function $c(x, y)$ that satisfies
the Monge condition and an additional condition of \emph{bitonicity}:
for each $x$, the function $c(x, y)$ is nonincreasing in~$y$ for
$y<y_0(x)$ and nondecreasing in~$y$ for $y > y_0(x)$. %
Note that this rules out the circular case. %
The second algorithm of~\cite{Aggarwal.A:1992} is designed for bitonic
Monge costs and runs in $O(n\log M)$ time for an unbalanced
transportation problem with integer weights on the line, where $M$ is
the total weight of the matched mass and $n$ is the number of points
in the larger histogram. %

The algorithm proposed in the present article only applies to the
\emph{balanced} problem for a Monge cost. %
However it does not involve bitonicity and is therefore applicable on
the circle, where it achieves the same $O(n\log M)$ time as the second
algorithm of \cite{Aggarwal.A:1992} if all weights are integer
multiples of~$1/M$. %
Although our theory is developed for the case of costs satisfying a
strict inequality in the Monge condition, it can be checked that the
discrete algorithm works for the case $c(x, y) = |x - y|$, which can
be treated as a limit of $|x - y|^\lambda$, $\lambda > 1$, as $\lambda
\to 1$ \cite{Rabin.J:2009a}. %

Finally we note that results of \cite{Aggarwal.A:1992} were extended
in a different direction by McCann \cite{McCann.R:1999}, who provides,
again in the balanced setting, a generalization of their first
algorithm to the case of a general cost of the \emph{concave} type on
the open line. %
This case is opposite to Monge costs and requires completely different
tools. %
Indeed, for a strictly concave cost such as $c(x, y) = \sqrt{|x - y|}$
the notion of locally optimal transport plan on the universal cover
does not make sense: concave costs favor long-haul transport over
local rearrangements, destroying local finiteness. %


\clearpage

\end{document}